\title{On the extension of the mean curvature flow}
\author{Nam Q.  Le$^*$}
\address{Department of
Mathematics, Columbia University, New York,
 USA}
\email{namle@math.columbia.edu}
\author{Natasa Sesum$^{**}$}
\address{Department of Mathematics, Columbia University, New York,
USA}
\email{natasas@math.columbia.edu}
\thanks{$**:$ Partially supported
by NSF grant 0604657}
\newcommand{\review}[2][\right]{\relax
\ifx#1\right\relax \left.\fi#2#1\rvert}
\let\abs=\envert
\newtheorem{theorem}{Theorem}[section]
\newtheorem{propo}{Proposition}[section]
\newtheorem{remark}{Remark}[section]
\newtheorem{corollary}{Corollary}[section]
\newtheorem{lemma}{Lemma}[section]
\newtheorem{example}{Example}[section]
\newcommand{\bef}{\begin{flushright}}
\newcommand{\eef}{\end{flushright}}
\newcommand{\eval}[2][\right]{\relax
\ifx#1\right\relax \left.\fi#2#1\rvert}
\let\abs=\envert
\numberwithin{equation}{section}
\let\norm=\enVert
\newcommand\e{\varepsilon}
\newcommand{\h}{\hspace*{.24in}}
\def\h{\hspace*{.24in}}
\def\beq{\begin{eqnarray*}}
\def\eeq{\end{eqnarray*}}
\def\RR{\mbox{$I\hspace{-.06in}R$}}
\begin{document}
\maketitle
\author
\pagenumbering{arabic}

\begin{abstract}
Consider a family of smooth immersions $F(\cdot,t): M^n\to \mathbb{R}^{n+1}$ of closed hypersurfaces in $\mathbb{R}^{n+1}$ moving 
by the mean curvature flow $\frac{\partial F(p,t)}{\partial t} = -H(p,t)\cdot \nu(p,t)$, for $t\in [0,T)$.  In \cite{Cooper} Cooper has 
recently  proved that the mean curvature blows up at the singular time $T$. We show that if the second fundamental form stays bounded 
from below all the way to $T$, then the scaling invariant mean curvature integral bound is enough to extend the flow past time $T$, and 
this integral bound is optimal in some sense explained below.
\end{abstract}
\noindent

\section{Introduction}
\h Let $M^{n}$ be a compact $n$-dimensional hypersurface without boundary, and let $F_{0}: M^{n}\rightarrow \RR^{n+ 1}$ be a smooth immersion of $M^{n}$ into $\mathbb{R}^{n+1}$. Consider a smooth one-parameter family of immersions
\begin{equation*}
 F(\cdot, t): M^{n}\rightarrow \RR^{n +1}
\end{equation*}
satisfying
\begin{equation*}
 F(\cdot, 0) = F_{0}(\cdot)
\end{equation*}
and 
\begin{equation}
 \frac{\partial F(p, t)}{\partial t} = -H(p, t)\nu(p,t)~\forall (p, t)\in M\times [0, T).
\label{MCF1}
\end{equation}
Here $H(p, t)$ and $\nu(p, t)$ denote the mean curvature and a choice of unit normal for the hypersurface $M_{t} = F(M^{n},t)$ at $F(p, t)$.
We will sometimes also write $x(p, t) = F(p, t)$ and refer to (\ref{MCF1}) as to the mean curvature flow equation. \\
\h Without any special assumptions on $M_0$, the mean curvature flow (\ref{MCF1}) will in general develop singularities  in finite time, characterized by a blow up of the second fundamental form $A(\cdot,t)$.

\begin{theorem}[Huisken \cite{Huisken84}]
Suppose $T < \infty$ is the first singularity time  for a compact mean curvature flow. Then $\sup_{M_t}|A|(\cdot,t) \to \infty$ as $t\to T$.
\label{Aunbound}
\end{theorem}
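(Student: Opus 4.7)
The plan is to argue by contradiction: assume $\sup_{M_t}|A|(\cdot,t) \leq C_0 < \infty$ on $[0,T)$ and show that the flow can then be smoothly extended to $[0, T+\e)$, contradicting the assumption that $T$ is the first singular time. The strategy has three stages: (i) bootstrap the bound on $|A|$ up to bounds on all covariant derivatives $\nabla^m A$, (ii) deduce that the immersions $F(\cdot,t)$ converge smoothly to a limit immersion $F(\cdot,T)$, and (iii) invoke short-time existence starting from $F(\cdot,T)$.

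For stage (i), I would compute, as Huisken did, the evolution equation
\[
\frac{\partial}{\partial t}|\nabla^m A|^2 = \Delta |\nabla^m A|^2 - 2|\nabla^{m+1} A|^2 + \sum_{i+j+k=m} \nabla^i A \ast \nabla^j A \ast \nabla^k A \ast \nabla^m A,
\]
where $\ast$ denotes a contraction with universal coefficients. Under the standing assumption $|A|\leq C_0$, the case $m=0$ gives $\partial_t |A|^2 \leq \Delta|A|^2 + C$ trivially, but the point is to proceed by induction on $m$. Assuming bounds on $|\nabla^j A|$ for $j<m$, the reaction term in the equation for $|\nabla^m A|^2$ is controlled by $C_m(1+|\nabla^m A|^2)$, so the maximum principle applied on the compact manifold $M_t$ yields a bound $|\nabla^m A|\leq C_m$ on $[0,T)$ for every $m$. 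The main obstacle here is bookkeeping: carefully combining the induction hypothesis with Young's inequality to absorb the top-order term, and making sure all constants depend only on $n$, $m$, $C_0$, $T$, and the initial data.

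For stage (ii), the bound $|H|\leq C\sqrt{n}\, C_0$ bounds the speed of the flow, so $F(\cdot,t)$ is uniformly Cauchy as $t \to T$ and converges in $C^0$ to some $F(\cdot,T)$. The metric evolves by $\partial_t g_{ij} = -2H h_{ij}$, bounded in absolute value by $2C_0^2$, so the metrics $g(t)$ remain uniformly equivalent to $g(0)$ and converge. Combined with the uniform bounds on $|\nabla^m A|$ from stage (i) and standard interpolation on the fixed topological manifold $M^n$, one upgrades convergence to $C^\infty$, producing a smooth limit immersion $F(\cdot,T)$ whose second fundamental form is still bounded.

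For stage (iii), I would apply the short-time existence theorem for mean curvature flow starting from the smooth closed immersion $F(\cdot,T)$ to obtain a smooth solution on $[T, T+\e)$. Uniqueness and the $C^\infty$ convergence $F(\cdot,t) \to F(\cdot,T)$ allow gluing, producing a smooth extension of the original flow past $T$, which contradicts the maximality of $T$. Hence the assumption $\sup|A|\leq C_0$ must fail, proving $\sup_{M_t}|A|(\cdot,t)\to\infty$ as $t\to T$.
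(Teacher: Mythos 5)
The paper states this result as Theorem~\ref{Aunbound} and simply cites Huisken \cite{Huisken84} without reproducing a proof. Your argument is exactly the standard (and Huisken's original) one --- assuming a bound on $|A|$, bootstrap via the evolution equations for $|\nabla^m A|^2$ and the maximum principle to derivative bounds of all orders, obtain smooth convergence to a limit immersion at $t=T$, and contradict maximality of $T$ by short-time existence --- and it is correct.
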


By the work of Huisken and Sinestrari \cite{HS} the blow up of $H$ near a singularity is known for mean convex hypersurfaces.  They also established lower bounds on the principal curvatures  in this mean-convex setting. In \cite{Cooper}, by a blowup argument, Cooper shows that the mean curvature being uniformly bounded up to $T < \infty$ is enough to extend the flow (\ref{MCF1}) past time $T$. All those results  motivate a natural question: what are the optimal conditions that will guarantee the existence of a smooth solution to the mean curvature flow (\ref{MCF1})?\\
 
We will use the following notation throughout the whole paper,
$$||v||_{L^p(M\times[0,T))} := (\int_0^T\int_{M_t}|v|^p\, d\mu\, dt)^{\frac{1}{p}},$$
for a function $v(\cdot,t)$ defined on $M\times [0,T)$.

\h In this paper, we prove the following
\begin{theorem}
Assume that for the mean curvature flow (\ref{MCF1}), we have\\
(i) A lower bound on the second fundamental form
\begin{equation}
 h_{ij}\geq -B g_{ij}
\label{pinchA}
\end{equation}
where $B$ is a nonnegative number.\\
(ii) An integral bound on the mean curvature
\begin{equation}
 \norm{H}_{L^{\alpha}(M\times [0, T))}< \infty
\label{intboundH}
\end{equation}
for some $\alpha\geq n + 2$.\\
\h Then the flow can be extended past time $T$.
\label{MCbound}
\end{theorem}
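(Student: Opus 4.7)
The proof reduces to showing a uniform $L^\infty$ bound on the second fundamental form, for then Theorem~\ref{Aunbound} forces the flow to extend past $T$. The first step is a purely algebraic observation coming from assumption~(i): if the principal curvatures $\lambda_1,\ldots,\lambda_n$ of $M_t$ all satisfy $\lambda_i\geq -B$, then $\lambda_i+B\geq 0$ and
\[
\sum_{i=1}^{n}(\lambda_i+B)^{2}\;\leq\;\Bigl(\sum_{i=1}^{n}(\lambda_i+B)\Bigr)^{2}=(H+nB)^{2},
\]
so expanding yields the pointwise inequality $|A|^{2}\leq C(n)(H^{2}+B^{2})$. Therefore it is enough to bound $|H|$ uniformly in space and time.

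Starting from the standard evolution equations $\partial_{t}H=\Delta H+|A|^{2}H$ and $\partial_{t}\,d\mu_{t}=-H^{2}\,d\mu_{t}$, I would set $u:=H^{2}$ and derive the parabolic differential inequality
\[
\partial_{t}u\;\leq\;\Delta u+C(n)\bigl(H^{2}+B^{2}\bigr)u
\]
on $M\times[0,T)$, whose coefficient $a:=C(n)(H^{2}+B^{2})$ lies in $L^{\alpha/2}(M\times[0,T))$ by hypothesis~(ii), since $M\times[0,T)$ has finite total measure. My plan is to run a Moser iteration on $u$: test the inequality against $u^{p-1}\eta^{2}(t)$ for a smooth time cut-off $\eta$ and $p\geq 1$, integrate over $M_{t}$ (the identity $\partial_{t}\,d\mu_{t}=-H^{2}\,d\mu_{t}$ contributes a favorable sign), integrate by parts to produce the term $|\nabla u^{p/2}|^{2}$, and estimate the right-hand side using the Michael--Simon--Sobolev inequality on the hypersurface $M_{t}$,
\[
\Bigl(\int_{M_{t}}f^{\tfrac{2n}{n-2}}\Bigr)^{\tfrac{n-2}{n}}\;\leq\;C(n)\int_{M_{t}}\bigl(|\nabla f|^{2}+H^{2}f^{2}\bigr),
\]
applied with $f=u^{p/2}$. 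A H\"older split of $\int au^{p}$ against the resulting Sobolev quantity then gives a recursion of the form $\|u\|_{L^{\gamma p}(M\times[T_{1},T))}\leq C_{p}\|u\|_{L^{p}(M\times[T_{0},T))}$ on nested time intervals $T_{0}<T_{1}<T$, which iterates in the standard way to an $L^{\infty}$ bound on $u=H^{2}$.

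The main obstacle is the borderline exponent $\alpha=n+2$: this is exactly the scale-invariant exponent for mean curvature flow, so a naive Moser step produces no power gain and the bare Sobolev constant need not dominate the coefficient $a$. I expect to handle this through the absolute continuity of the integral guaranteed by~(ii): given any $\varepsilon>0$ one can choose $T_{\varepsilon}<T$ with $\|H\|_{L^{\alpha}(M\times[T_{\varepsilon},T))}<\varepsilon$, and running the iteration on the short slab $M\times[T_{\varepsilon},T)$ then produces a factor of $\varepsilon$ in front of the principal Sobolev term, which can be absorbed into the left-hand side once $\varepsilon$ is small enough. The supercritical case $\alpha>n+2$ is easier since H\"older yields a genuine positive power. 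Once $\sup_{M\times[0,T)}H^{2}<\infty$ is established, the inequality $|A|^{2}\leq C(H^{2}+B^{2})$ controls the full second fundamental form and Theorem~\ref{Aunbound} completes the argument.
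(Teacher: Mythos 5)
Your proposal is correct in its essential mathematics, and it shares the paper's core technical scaffolding -- the algebraic reduction $|A|^{2}\leq C(n)(H^{2}+B^{2})$ under the lower bound $h_{ij}\geq-Bg_{ij}$, the parabolic Sobolev inequality derived from Michael--Simon, and a Moser iteration that is critical at the first step (needing smallness) and supercritical thereafter. But the way you obtain the necessary smallness is genuinely different from the paper, and it is worth contrasting the two.

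The paper argues by contradiction via a parabolic rescaling: it picks interior maxima $Q_i=H(x_i,t_i)\to\infty$, blows up by $Q_i$, and exploits that $\|H\|_{L^{n+2}}$ is scale-invariant so the rescaled flows have $\|\tilde H_i\|_{L^{n+2}}\to0$ on unit parabolic cylinders. This forces the authors to carry out a \emph{spatially localized} Moser iteration on balls $B(x_i,r)\cap M_t$, with cutoffs built from $|x-x_0|^2$ (using Brakke's identity $(\partial_t-\Delta)|x-x_0|^2=-2n$), and to track carefully how the constants $C_a,C_b,C_c,C_d$ and $\delta_1,\delta_2$ depend on $C_0,C_1,C_3$, so that nothing degenerates after rescaling. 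Your route avoids the blow-up entirely: you observe that since $\|H\|_{L^{n+2}(M\times[0,T))}<\infty$ and $\int_0^T|M_t|\,dt\leq T|M_0|<\infty$, absolute continuity of the integral makes $\|H\|_{L^{n+2}(M\times[T_\varepsilon,T))}$ and $\|a\|_{L^{(n+2)/2}(M\times[T_\varepsilon,T))}$ as small as desired by pushing $T_\varepsilon$ toward $T$. Then one runs the iteration globally in space with time cutoffs only, absorbs the critical term, gains integrability, and passes to the supercritical regime exactly as the paper does on its nested cylinders $D_k$. This is cleaner: there is no need for Brakke's identity, no spatial localization, and no delicate bookkeeping of how constants behave under rescaling, because one never rescales.

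A couple of small points to tidy if you write this up: (a) after testing against $u^{p-1}\eta^{2}$ you also need the dominant-energy term $\max_{t}\int_{M_{t}}u^{p}\eta^{2}$ to close the Sobolev step (the paper's Proposition~\ref{MStime} has the factor $\max_{t}\|v\|_{L^{2}(M_{t})}^{4/n}$), so the discrete analogue of (\ref{superc}) must be stated with that sup-in-time term; (b) the first iteration step at the borderline exponent $(n+2)/2$ requires smallness of $\|a\|_{L^{(n+2)/2}}\bigl(1+\|H\|_{L^{n+2}}^{n+2}\bigr)^{n/(n+2)}$, not just of $\|a\|_{L^{(n+2)/2}}$ -- but restricting to $M\times[T_\varepsilon,T)$ makes \emph{both} factors controlled, so this is harmless; and (c) the starting integrability $u=H^2\in L^{(n+2)/2}$ and the bound $\|1\|_{L^{n+2}(M\times[T_\varepsilon,T))}\to0$ (needed because of the $B^2$ contribution to $a$) both follow from the finite spacetime volume, and are worth stating explicitly. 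None of these affects the validity of the approach.

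One trade-off worth noting: the paper's blow-up version, precisely because it tracks constants through rescaling, automatically yields a \emph{localized} interior estimate (an $L^\infty$ bound on a half-cylinder in terms of an $L^{n+2}$ norm on a full cylinder), which has independent interest and is reusable in other contexts. Your version gives the global bound needed for Theorem~\ref{MCbound} directly, but does not produce that local estimate as a by-product.
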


In section \ref{sec-prel} we will show the integral bound assumption (\ref{intboundH}) is optimal in certain sense.

In \cite{WangR} Wang, extending a result of the
second author \cite{sesum}, proved the analogous result for the Ricci flow, namely that if the Ricci curvature is bounded from below, a uniform integral scalar curvature bound is enough to extend the Ricci flow past some finite time.

As can be seen in the proof of Theorem \ref{MCbound} in Section
\ref{thmproof}, the actual conditions we need in lieu of
(\ref{pinchA}) are the
following \\
(iii) A lower bound for the mean curvature
\begin{equation*}
 H\geq -l~ \text{for some} ~l>0
\end{equation*}
and \\
(iv) An upper bound for the squared second fundamental form in terms of a linear function of the squared mean curvature
\begin{equation*}
 \abs{A}^2 \leq C_{\ast}H^2 + b~\text{for some}~ C_{\ast}, b>0.
\end{equation*}
These conditions can be verified in many situations, e.g, for mean convex intitial hypersurfaces $M^{n}$ (see Huisken 
and Sinestrari \cite{HS}) or more generally, all starshaped hypersurfaces and manifolds that can be obtained by buiding in small, concave
dents into mean convex hypersurfaces (see Smoczyk \cite{Smoczyk}).

As a corollary we obtain the following result.

\begin{corollary}
\label{cor-appl}
Let $M^n$ be a mean convex or a starshaped hypersurface in $\mathbb{R}^{n+1}$. Assume we have
$$||H||_{L^{\alpha}(M\times[0,T))} < \infty,$$
for some $\alpha \ge n+2$, along the flow (\ref{MCF1}). The flow can be extended past time $T$.
\end{corollary}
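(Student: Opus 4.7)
The plan is to derive Corollary \ref{cor-appl} directly from Theorem \ref{MCbound}, exploiting the observation made in the paragraph preceding the corollary that hypothesis (i) of Theorem \ref{MCbound} may be replaced by the weaker pair (iii) $H\geq -l$ and (iv) $|A|^2\leq C_{\ast}H^2+b$. Since the integral hypothesis (ii) is exactly the standing assumption of the corollary, the task reduces to checking that (iii) and (iv) hold along the flow in the two geometric settings under consideration.

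For the mean convex case, I would first note that mean convexity is preserved under (\ref{MCF1}): the standard evolution identity
\[
\frac{\partial H}{\partial t}=\Delta H+|A|^{2}H
\]
together with the parabolic maximum principle, applied to initial data $H\geq 0$, yields $H\geq 0$ along the entire flow, which is (iii) with $l=0$. To verify (iv), I would invoke the convexity/pinching estimates of Huisken--Sinestrari \cite{HS}, which give, for every $\eta>0$, a constant $C(\eta)$ such that $|A|^{2}\leq (1+\eta)H^{2}/n + C(\eta)$ along a mean convex flow; this is the quadratic bound (iv) with $C_{\ast}=(1+\eta)/n$ and $b=C(\eta)$.

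For the starshaped case, I would quote Smoczyk \cite{Smoczyk}, who shows that starshapedness is preserved under (\ref{MCF1}) and establishes the strict positivity of $\langle F,\nu\rangle$, from which one derives both a uniform positive lower bound on $H$ (yielding (iii)) and the quadratic curvature bound $|A|^{2}\leq C_{\ast}H^{2}+b$ (yielding (iv)); the same mechanism extends, as mentioned in the paper, to hypersurfaces built by installing small concave dents into mean convex ones.

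With (ii), (iii), and (iv) simultaneously verified, the alternative form of Theorem \ref{MCbound} applies and extends the flow past $T$. There is no genuine analytic obstacle here; the only points requiring care are notational (matching the geometric quantities of \cite{HS} and \cite{Smoczyk} with those appearing in (iii) and (iv)) and the verification that the cited pinching constants are uniform on the maximal existence interval $[0,T)$, which is precisely what those works supply under the smoothness of the flow on $[0,T)$.
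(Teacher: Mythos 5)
Your proposal takes essentially the same route as the paper's proof: verify conditions (iii) $H\ge -l$ and (iv) $|A|^2 \le C_\ast H^2 + b$ in the mean convex and starshaped settings via \cite{HS} and \cite{Smoczyk}, then invoke the alternative form of Theorem \ref{MCbound}. The paper's proof additionally makes explicit the shift $\tilde H = H + k$ with $k$ chosen so that $\tilde H > 0$ and $|A|^2 \le \tilde C_\ast \tilde H^2$, yielding $\partial_t \tilde H \le \Delta\tilde H + \tilde C_\ast\tilde H^3$ (the concrete form needed for Lemma \ref{MoserMC} and the blow-up argument); you defer this step to the remark preceding the corollary, which is a legitimate equivalent reduction, though note that the precise pinching form you quote from \cite{HS} is not quite their statement --- they prove a lower bound $\lambda_1 \geq -\eta H - C_\eta$, from which the quadratic bound $|A|^2 \le C_\ast H^2 + b$ follows by an elementary computation.
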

The proof of Theorem \ref{MCbound} is based on a blow-up argument, 
and the Moser iteration using the  Michael-Simon inequality. By their inequality
there is a uniform constant $c_{n}$, depending only on $n$, such that for any nonnegative, $C^1$ function 
$f$ on a hypersurface $M\subset \mathbb{R}^{n+1}$, the
following  holds
\begin{equation}
\label{eqn-simon}
(\int_M f^{\frac{n}{n-1}}\,d\mu)^{\frac{n-1}{n}} \le c_{n}\int_{M}(|\nabla f|+ |H|f)\, d\mu.
\end{equation}
\h The paper is organized as follows. In Section 2, we introduce basic notations concerning evolving hypersurfaces and provide
an example showing that the integral bound (\ref{intboundH}) in Theorem 1.2 is optimal to some extent. In Section 3, we establish a 
modified Michael-Simon inequality and 
Sobolev type inequalities for the mean curvature flow that can be of independent interest. Section 4 is devoted to a reverse 
Holder inequality for a subsolution to a parabolic equation changing during
mean curvature flow. It turns out to be the key estimate for the Moser iteration process carried out in Section 5 
(for the supercritical case) and Section 6 
(for the critical case with a smallness condition). Then we bound uniformly the mean curvature in terms of its integral bounds in Section 7. 
In the final Section \ref{thmproof}, 
we give the proof of the Main Theorem using a blow up argument. 

\section{Preliminaries}
\label{sec-prel}

\h For any compact $n$-dimensional hypersurface $M^{n}$ which is smoothly embedded in $\RR^{n+1}$ by $F: M^{n}\rightarrow \RR^{n+1}$, 
let us denote by $g = (g_{ij})$ the induced metric, $A = (h_{ij})$ the second fundamental form,  $d\mu =\sqrt{\text{det}~(g_{ij})}~dx$ the volume form, 
$\nabla$ the induced Levi-Civita connection
and $\Delta$ the induced Laplacian.
Then the mean curvature of $M^{n}$ is given by
\begin{equation*}
 H = g^{ij}h_{ij}.
\end{equation*}
In \cite{Huisken84} it has been computed that
$$\frac{\partial}{\partial t}d\mu = -H^2\, d\mu,$$
$$\frac{\partial}{\partial t} H = \Delta H + |A|^2 H.$$

To some extent, the constant $\alpha = n +2$ appearing in Theorem \ref{MCbound} is optimal as illustrated by the following example.

\begin{example}
Let $M$ be the standard sphere $S^{n}$ which is immersed into
$\RR^{n +1}$ by $F_{0}$. Then the mean curvature flow with initial
data $M$ has a simple formula: $F(\cdot, t) = r(t)F_{0}(\cdot)$
where $r(t) = \sqrt{1-2nt}$. Therefore $T= \frac{1}{2n}$ is the
extinction time of this mean curvature flow. We have
\begin{equation*}
 r(t) = \sqrt{2n(T-t)}, ~H(t) = \frac{n}{r(t)}.
\end{equation*}
Let us denote by $w_{n}$ the area of $S^{n}$. Compute,
\begin{eqnarray*}
 \norm{H}_{L^{\alpha}(M\times [0, T))} &=& \left(\int_{0}^{T}\frac{n^{\alpha}}{(r(t))^{\alpha}}[r(t)]^nw_{n} dt\right)^{\frac{1}{\alpha}}
\\ &=& \frac{n w_{n}^{\frac{1}{\alpha}}}{(2n)^{\frac{\alpha - n}{2\alpha}}}\left(\int_{0}^{T} \frac{dt}{(T-t)^{\frac{\alpha-n}{2}}}\right)^{\frac{1}{\alpha}}
= \left\{ \begin{aligned} 
<\infty &\h \text{if}~ \alpha < n +2\\
=\infty &\h \text{if} ~ \alpha \geq n + 2.                                                                         
\end{aligned}
\right.
\end{eqnarray*}
Thus, the constant $\alpha$ in (\ref{intboundH}) cannot be smaller than $n +2$.
\end{example}

\begin{remark}
 When $\alpha = n +2$, the quantity $\norm{H}_{L^{\alpha}(M\times [0, T))}$ is invariant under the folowing rescaling of the mean curvature 
flow (\ref{MCF1}):
\begin{equation*}
 \tilde{F}(p, t) = Q F(p, \frac{t}{Q^2}), \,\,\,\text{for} \,\,\,  Q>0.
\end{equation*}
\end{remark}

\begin{remark}
The characterization of the maximal time of existence of a solution to
an evolution equation by the blow up of its scaling invariant
quantities seems to be a  ubiquitous phenomenon. Let us mention a couple of
examples among many. In fluid dynamics, we have the celebrated
Beale-Kato-Majda Theorem \cite{BKM} which says that if the maximal time of existence
of solutions to the  incompressible Euler or Navier-Stokes equation is finite then
necessarily the $L^1_{\mbox{time}}L^{\infty}_{\mathbb{R}^3}$ norm of the
vorticity blows up. In the cases of the Ricci and the  mean
curvature flow,  in addition to controlling the scaling
invariant quantities, we also need lower bounds on the Ricci curvature
and the second fundamental form, respectively. For the incompressible Euler or Navier-Stokes
equations, the divergence-free property of the velocity vector field
plays a crucial role and is in some sense analogous to those lower bounds on the Ricci
curvature and the second fundemantal form.
\end{remark}

\section{Sobolev Inequalities for the Mean Curvature Flow}
\h In this section, we establish a version of Michael-Simon inequality, Lemma \ref{MSlem}, that allows us to derive a 
Sobolev type inequality, Proposition \ref{MStime}, for the mean curvature flow. This Sobolev inequality will be crucial for the Moser iteration in the next sections. The key 
step in the Moser iteration is the inequality (\ref{superc}).\\
\h The following lemma consists of a slightly modified Michael-Simon inequality whose proof is based on the original Michael-Simon inequality
(\ref{eqn-simon}) together with the interpolation inequalities.
\begin{lemma}
Let $M$ be a compact $n$-dimensional hypersurface without boundary, which is smoothly embedded in $\RR^{n + 1}$. Let 
\begin{equation} 
Q = \left\{ \begin{aligned} 
\frac{n}{n-2} &\h \text{if}~ n>2\\
<\infty &\h \text{if} ~ n=2                                                                         
\end{aligned}
\right.
\end{equation}
Then, for all Lipschitz functions $v$ on $M$, we have
\begin{equation*}
 \norm{v}^2_{L^{2Q}(M)}\leq c_{n}\left(\norm{\nabla v}^2_{L^{2}(M)} + \norm{H}^{n + 2}_{L^{n + 2}(M)}\norm{v}^2_{L^{2}(M)}\right)
\end{equation*}
where $H$ is the mean curvature of $M$ and $c_{n}$ is a positive constant depending only on $n$. 
\label{MSlem}
\end{lemma}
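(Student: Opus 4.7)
The plan is to apply the Michael--Simon inequality \eqref{eqn-simon} to $f = |v|^{\gamma}$ for a carefully chosen $\gamma$, bound the gradient and mean-curvature pieces on the right-hand side by Cauchy--Schwarz, and then convert the resulting $\int_{M} H^{2} v^{2}\, d\mu$ into the desired $\|H\|_{L^{n+2}}^{n+2}\|v\|_{L^{2}}^{2}$ via a H\"older--interpolation--Young chain.

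For $n > 2$, the correct choice is $\gamma = 2(n-1)/(n-2)$, which forces $\gamma n/(n-1) = 2Q$ and $\gamma - 1 = n/(n-2)$, so that Michael--Simon applied to $f = |v|^{\gamma}$ yields
\begin{equation*}
\|v\|_{L^{2Q}}^{\gamma} \leq c_{n}\left(\gamma \int_{M} |v|^{n/(n-2)}|\nabla v|\, d\mu + \int_{M} |H|\, |v|^{\gamma}\, d\mu \right).
\end{equation*}
Cauchy--Schwarz controls the gradient integral by $\|v\|_{L^{2Q}}^{n/(n-2)} \|\nabla v\|_{L^{2}}$, and splitting $|H|\, |v|^{\gamma} = (|H|\, |v|)\, |v|^{\gamma-1}$ and applying Cauchy--Schwarz a second time controls the mean-curvature integral by $(\int_{M} H^{2} v^{2}\, d\mu)^{1/2} \|v\|_{L^{2Q}}^{n/(n-2)}$. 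Dividing through by the common factor $\|v\|_{L^{2Q}}^{n/(n-2)}$ (harmless when $v \not\equiv 0$) and squaring yields the intermediate estimate
\begin{equation*}
\|v\|_{L^{2Q}}^{2} \leq c_{n}\left( \|\nabla v\|_{L^{2}}^{2} + \int_{M} H^{2} v^{2}\, d\mu \right).
\end{equation*}

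The remaining step is to turn $\int_{M} H^{2} v^{2}\, d\mu$ into the form stated in the lemma. H\"older with conjugate exponents $(n+2)/2$ and $(n+2)/n$ gives $\int_{M} H^{2} v^{2}\, d\mu \leq \|H\|_{L^{n+2}}^{2} \|v\|_{L^{2(n+2)/n}}^{2}$, and since $2(n+2)/n$ sits strictly between $2$ and $2Q$, standard log-convexity of $L^{p}$-norms provides $\|v\|_{L^{2(n+2)/n}}^{2} \leq \|v\|_{L^{2}}^{4/(n+2)} \|v\|_{L^{2Q}}^{2n/(n+2)}$, the interpolation weight working out to $\theta = 2/(n+2)$. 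A weighted Young inequality with conjugate exponents $(n+2)/n$ and $(n+2)/2$ then bounds this product by $\varepsilon \|v\|_{L^{2Q}}^{2} + C(\varepsilon) \|H\|_{L^{n+2}}^{n+2} \|v\|_{L^{2}}^{2}$, and absorbing the $\varepsilon$-term back into the left-hand side of the intermediate estimate recovers the desired inequality.

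The $n = 2$ case runs along the same template, with Michael--Simon (whose critical exponent is then $n/(n-1) = 2$) applied to $|v|^{\gamma}$ for any convenient $\gamma > 1$; this flexibility is precisely why $Q$ is allowed to be any finite constant in that dimension. There is no genuine conceptual obstacle; the only delicate aspect throughout is the bookkeeping of exponents, since the specific choices $\gamma = 2(n-1)/(n-2)$ and $\theta = 2/(n+2)$ are essentially forced in order that the final power of $\|H\|_{L^{n+2}}$ emerge as exactly $n+2$.
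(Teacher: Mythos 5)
Your proof is correct, and it uses the same basic toolkit as the paper (Michael--Simon applied to $|v|^{\gamma}$ with $\gamma = 2(n-1)/(n-2)$, then H\"older, interpolation, and Young), but it organizes the mean-curvature term differently in a way worth noting. You apply Cauchy--Schwarz to $|H|\,|v|^{\gamma} = (|H|\,|v|)\,|v|^{\gamma-1}$ (using $2(\gamma-1)=2Q$), divide out the common factor $\norm{v}_{L^{2Q}}^{n/(n-2)}$ (licit since the lemma is trivial when $v\equiv 0$), and thereby pass through the classical intermediate Sobolev inequality
\begin{equation*}
\norm{v}_{L^{2Q}(M)}^{2} \leq c_n \Bigl( \norm{\nabla v}_{L^2(M)}^{2} + \int_M H^2 v^2\, d\mu \Bigr),
\end{equation*}
which cleanly decouples the Michael--Simon step from the introduction of $\norm{H}_{L^{n+2}}$; you then bring in the $L^{n+2}$ norm by H\"older with conjugates $\tfrac{n+2}{2},\tfrac{n+2}{n}$, interpolate $\norm{v}_{L^{2(n+2)/n}}$ between $L^2$ and $L^{2Q}$, and close with a weighted Young inequality. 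The paper does not isolate $\int H^2 v^2$: it applies H\"older with conjugates $n+2,\tfrac{n+2}{n+1}$ directly to $\int |H|\, v^{\gamma}$, producing $\norm{H}_{L^{n+2}}\norm{v}_{L^{2m}}^{\gamma}$ with $m = \tfrac{(n-1)(n+2)}{(n-2)(n+1)}$, raises the whole inequality to the power $\tfrac{n-2}{n-1}$, absorbs the $\norm{v}_{L^{2Q}}$ factor coming from the gradient term by a first Young step (rather than by division), then interpolates $\norm{v}_{L^{2m}}$ between $L^2$ and $L^{2Q}$ and absorbs again via an $H$-weighted choice of $\varepsilon$, using the identity $\tfrac{n-2}{n-1}(1+\tfrac{n^2}{n-2}) = n+2$. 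Your route is a bit cleaner and passes through a recognizable intermediate inequality; the paper's avoids the division at the cost of a more intricate exponent chase and two absorption steps. Both are correct proofs of the same estimate.
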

\begin{proof}
We only need to prove the lemma for $v\geq 0$. Applying Michael-Simon's inequality (\ref{eqn-simon})\cite{MM} to the function $w = v^{\frac{2(n-1)}{n-2}}$, 
we get
\begin{equation*}
 \left(\int_{M}v^{\frac{2n}{n-2}} d\mu\right)^{\frac{n-1}{n}}\leq c_{n}\left(\int_{M}\abs{\nabla v}v^{\frac{n}{n-2}} d\mu + 
\int_{M}\abs{H}v^{\frac{2(n-1)}{n-2}} d\mu\right).
\end{equation*}
By Holder's inequality it follows that
\begin{eqnarray*}
 \left(\int_{M}v^{\frac{2n}{n-2}}d\mu\right)^{\frac{n-2}{n}}&\leq& c_{n}^{\frac{n-2}{n-1}}\left(\int_{M}\abs{\nabla v}v^{\frac{n}{n-2}} d\mu + 
\int_{M}\abs{H}v^{\frac{2(n-1)}{n-2}} d\mu\right)^{\frac{n-2}{n-1}}\\ &\leq &
c_{n}\left (\norm{\nabla v}_{L^{2}(M)}\norm{v}^{\frac{n}{n-2}}_{L^{2Q}(M)} + 
\norm{H}_{L^{n + 2}(M)}\norm{v}^{\frac{2(n-1)}{n-2}}_{L^{2m}(M)} \right)^{\frac{n-2}{n-1}}\\ &\leq &
c_{n} \left (\norm{\nabla v}^{\frac{n-2}{n-1}}_{L^{2}(M)}\norm{v}^{\frac{n}{n-1}}_{L^{2Q}(M)} + 
\norm{H}^{\frac{n-2}{n-1}}_{L^{n + 2}(M)}\norm{v}^{2}_{L^{2m}(M)} \right).
\end{eqnarray*} 
where
\begin{equation*}
 m =\frac{(n-1)(n + 2)}{(n-2)(n + 1)}.
\end{equation*}
Thus
\begin{equation}
  \norm{v}^2_{L^{2Q}(M)}\leq c_{n} \left (\norm{\nabla v}^{\frac{n-2}{n-1}}_{L^{2}(M)}\norm{v}^{\frac{n}{n-1}}_{L^{2Q}(M)} + 
\norm{H}^{\frac{n-2}{n-1}}_{L^{n + 2}(M)}\norm{v}^{2}_{L^{2m}(M)} \right).
\label{firstMM}
\end{equation}
By Young's inequality
\begin{equation}
 ab = (\e^{1/p}a)(\e^{-1/p}b) \leq \frac{\e a^{p}}{p} +\frac{\e^{-q/p}b^{q}}{q}\leq \e a^{p} + \e^{-q/p}b^{q},
\label{Young}
\end{equation}
where $a,b,\e > 0$, $p, q>1$ and $\frac{1}{p} +\frac{1}{q} =1$. If we apply it to (\ref{firstMM}), with 
\begin{equation*}
 a = \norm{v}^{\frac{n}{n-1}}_{L^{2Q}(M)}, \qquad b = \norm{\nabla v}^{\frac{n-2}{n-1}}_{L^{2}(M)},
\end{equation*}
and \begin{equation*}
    \e = \frac{1}{2c_n}, \,\,\, p =\frac{2(n-1)}{n}, \,\,\, q =\frac{2(n-1)}{n-2},
    \end{equation*}
we obtain
\begin{equation*}
  \norm{v}^2_{L^{2Q}(M)}\leq c_{n} \left ( \frac{1}{2c_{n}}\norm{v}^2_{L^{2Q}(M)} + 
(\frac{1}{2c_{n}})^{\frac{-n}{n-2}}\norm{\nabla v}^{2}_{L^{2}(M)} + 
\norm{H}^{\frac{n-2}{n-1}}_{L^{n + 2}(M)}\norm{v}^{2}_{L^{2m}(M)} \right).
\end{equation*}
Hence
\begin{equation}
  \norm{v}^2_{L^{2Q}(M)}\leq c_{n} \left (\norm{\nabla v}^{2}_{L^{2}(M)} + 
\norm{H}^{\frac{n-2}{n-1}}_{L^{n + 2}(M)}\norm{v}^{2}_{L^{2m}(M)} \right).
\label{beforeinter}
\end{equation}
Next, we will use the following interpolation inequality (see inequality (7.10) in \cite{GT})
\begin{equation}
 \norm{u}_{L^{r}}\leq \e  \norm{u}_{L^{s}} + \e^{-\mu}  \norm{u}_{L^{t}}
\label{interpol}
\end{equation}
where $t<r<s$ and 
\begin{equation*}
 \mu = (\frac{1}{t}-\frac{1}{r})/(\frac{1}{r}-\frac{1}{s}). 
\end{equation*}
Note that, in our case
\begin{equation*}
 1<m<Q,
\end{equation*}
and therefore, by (\ref{interpol})
\begin{equation}
 \norm{v}_{L^{2m}(M)}\leq \e  \norm{v}_{L^{2Q}(M)} + \e^{-\alpha}  \norm{v}_{L^{2}(M)}
\label{interep}
\end{equation}
where $\e>0$ and 
\begin{equation}
\alpha = \frac{Q(m-1)}{Q-m} =  \frac{n^2}{n-2}.
\end{equation}
Plugging (\ref{interep}) into the right hand side of (\ref{beforeinter}), we deduce that
\begin{eqnarray}
 \norm{v}^2_{L^{2Q}(M)}&\leq& c_{n}\norm{\nabla v}^{2}_{L^{2}(M)} + c_{n} \norm{H}^{\frac{n-2}{n-1}}_{L^{n + 2}(M)}
\left ( \e  \norm{v}_{L^{2Q}(M)} + \e^{-\alpha}  \norm{v}_{L^{2}(M)} \right)^2\nonumber \\ &\leq &
c_{n}\norm{\nabla v}^{2}_{L^{2}(M)} + c_{n} \norm{H}^{\frac{n-2}{n-1}}_{L^{n + 2}(M)}
\left ( \e^2  \norm{v}^2_{L^{2Q}(M)} + \e^{-2\alpha}  \norm{v}^2_{L^{2}(M)} \right).
\label{absorb1}
\end{eqnarray}
Now, we can absorb the term involving $ \norm{v}^2_{L^{2Q}(M)}$ into the left hand side of (\ref{absorb1}) by choosing
\begin{equation*}
 \e^{2} =\frac{1}{2c_{n}}\norm{H}^{-\frac{n-2}{n-1}}_{L^{n + 2}(M)}.
\end{equation*}
Since $\frac{n-2}{n-1}(1 + \alpha) =  n + 2$, we obtain the desired inequality
\begin{equation*}
  \norm{v}^2_{L^{2Q}(M)}\leq c_{n}\norm{\nabla v}^{2}_{L^{2}(M)} +
 c_{n}\norm{H}^{n+2}_{L^{n + 2}(M)}\norm{v}^2_{L^{2}(M)}. 
\end{equation*}

\end{proof}
\h Our Sobolev type inequality for the mean curvature flow is stated in the following proposition.

\begin{propo}
For all nonnegative Lipschitz functions $v$, one has
\begin{multline}
 ||v||^{\beta}_{L^{\beta}(M\times[0,T))}\\ \leq c_{n}\max_{0\leq t\leq T}
\norm{v}^{4/n}_{L^{2}(M_{t})}\left (||\nabla v||^2_{L^2(M\times[0,T))} +  \max_{0\leq t\leq T}
\norm{v}^{2}_{L^{2}(M_{t})} ||H||^{n+2}_{L^{n+2}(M\times[0,T))}\right),
\end{multline}
where $\beta := \frac{2(n+2)}{n}$.
\label{MStime}
 \end{propo}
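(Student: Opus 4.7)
The plan is to upgrade the time-fixed Michael--Simon type estimate of Lemma \ref{MSlem} to a space--time bound by inserting a pointwise-in-time Hölder interpolation between $L^{2}(M_t)$ and $L^{2Q}(M_t)$. The exponent $\beta=\tfrac{2(n+2)}{n}$ is precisely the one forced by matching the interpolation exponents so that the ``$L^{2}$'' factor absorbs harmlessly into the spatial supremum in time while the ``$L^{2Q}$'' factor is fed to Lemma \ref{MSlem}.

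Concretely, assume first $n>2$ so that $2Q=\tfrac{2n}{n-2}$. For each fixed $t\in[0,T)$ I would write the standard interpolation
$$\norm{v}_{L^{\beta}(M_t)}\leq \norm{v}_{L^{2}(M_t)}^{1-\theta}\,\norm{v}_{L^{2Q}(M_t)}^{\theta},\qquad \theta=\frac{n}{n+2},$$
which holds by Hölder since $\tfrac{1}{\beta}=\tfrac{1-\theta}{2}+\tfrac{\theta}{2Q}$. Raising to the $\beta$-th power yields $(1-\theta)\beta=\tfrac{4}{n}$ and $\theta\beta=2$, so
$$\int_{M_t}v^{\beta}\,d\mu\leq \norm{v}_{L^{2}(M_t)}^{4/n}\,\norm{v}_{L^{2Q}(M_t)}^{2}.$$

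Integrating in $t$ and pulling the factor $\norm{v}_{L^{2}(M_t)}^{4/n}$ out as its maximum gives
$$\norm{v}_{L^{\beta}(M\times[0,T))}^{\beta}\leq \max_{0\leq t\leq T}\norm{v}_{L^{2}(M_t)}^{4/n}\int_{0}^{T}\norm{v}_{L^{2Q}(M_t)}^{2}\,dt.$$
Applying Lemma \ref{MSlem} slicewise inside the time integral bounds the remaining factor by
$$c_{n}\int_{0}^{T}\!\!\Bigl(\norm{\nabla v}_{L^{2}(M_t)}^{2}+\norm{H}_{L^{n+2}(M_t)}^{n+2}\norm{v}_{L^{2}(M_t)}^{2}\Bigr)dt,$$
and pulling $\norm{v}_{L^{2}(M_t)}^{2}$ out of the second term as its maximum in time produces exactly the stated inequality. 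The case $n=2$ is handled identically after choosing any finite $Q$ in Lemma \ref{MSlem} (for instance $Q=3$, so that $2Q=6>\beta=4$, making the interpolation legitimate) and repeating the computation.

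There is no serious obstacle: the entire argument is Hölder's inequality plus Lemma \ref{MSlem}, and the only thing to get right is the bookkeeping that identifies $\theta=n/(n+2)$ as the interpolation parameter making the exponents land on $4/n$ and $2$.
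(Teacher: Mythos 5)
Your argument is the paper's: the pointwise-in-time interpolation $\norm{v}_{L^\beta(M_t)}\leq\norm{v}_{L^2(M_t)}^{1-\theta}\norm{v}_{L^{2Q}(M_t)}^{\theta}$ with $\theta=n/(n+2)$ is precisely the Hölder split $\int_{M_t}v^{2(n+2)/n}\,d\mu=\int_{M_t}v^2\,v^{4/n}\,d\mu\leq\bigl(\int_{M_t}v^{2n/(n-2)}\,d\mu\bigr)^{(n-2)/n}\bigl(\int_{M_t}v^2\,d\mu\bigr)^{2/n}$ that appears in the paper's proof, and the remaining steps (pulling out $\max_t\norm{v}^{4/n}_{L^2(M_t)}$, applying Lemma \ref{MSlem} slicewise, then pulling out $\max_t\norm{v}^{2}_{L^2(M_t)}$) coincide. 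So for $n>2$ this is the same proof, merely phrased via the interpolation inequality rather than a bare Hölder application.

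One small flag concerning your parenthetical remark about $n=2$. With $\beta=4$ and any finite $2Q>4$, the interpolation parameter for $L^4$ between $L^2$ and $L^{2Q}$ is $\theta=\tfrac{Q}{2(Q-1)}$, so the slicewise bound is $\int_{M_t}v^4\,d\mu\leq\norm{v}_{L^2(M_t)}^{2(Q-2)/(Q-1)}\norm{v}_{L^{2Q}(M_t)}^{2Q/(Q-1)}$, and $2Q/(Q-1)>2$ for every finite $Q$. The computation is therefore \emph{not} identical: you do not directly produce the factor $\norm{v}^{2}_{L^{2Q}(M_t)}$ to which Lemma \ref{MSlem} applies, and an extra step (for instance a further interpolation to convert the surplus power of $\norm{v}_{L^{2Q}}$ into a power of $\norm{v}_{L^2}$, at the cost of a $Q$-dependent constant) is needed. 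The paper's proof is also written only with the exponent $2n/(n-2)$ and leaves the $n=2$ case implicit, so this is a shared omission rather than a defect specific to your write-up, but your claim that the $n=2$ case goes through ``identically'' is not accurate as stated.
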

\begin{proof}
 By Holder's inequality, we have
\begin{eqnarray*}
 \int_{0}^{T}\int_{M_{t}}v^{\frac{2(n+2)}{n}}d\mu dt &=& \int_{0}^{T}dt \int_{M_{t}} v^2 v^{4/n}d\mu\\ &\leq&
\int_{0}^{T}dt \left(\int_{M_{t}}v^{\frac{2n}{n-2}}d\mu\right)^{\frac{n-2}{n}}\left(\int_{M_{t}}v^{2}d\mu\right)^{\frac{2}{n}}\\ &\leq &
\max_{0\leq t\leq T}
\norm{v}^{4/n}_{L^{2}(M_{t})} \int_{0}^{T}\norm{v(\cdot, t)}^{2}_{L^{2Q}(M_{t})}.
\end{eqnarray*}
Now, applying Lemma \ref{MSlem}, we get
\begin{multline*}
||v||^{\beta}_{L^{\beta}(M\times[0,T))}\\ \leq c_{n}\max_{0\leq t\leq T}
\norm{v}^{4/n}_{L^{2}(M_{t})}\left (\int_0^T\int_{M_t}|\nabla v|^2\, d\mu\, dt + 
\int_0^T(\int_{M_t}|H|^{n+2}\, d\mu)||v(\cdot,t)||^2_{L^2(M_t)}\,  dt\right)\\
\leq c_{n}\max_{0\leq t\leq T}
\norm{v}^{4/n}_{L^{2}(M_{t})}\left (||\nabla v||^2_{L^2(M\times[0,T))} +  \max_{0\leq t\leq T}
\norm{v}^{2}_{L^{2}(M_{t})} ||H||^{n+2}_{L^{n+2}(M\times[0,T))}\right).
\end{multline*}
\end{proof}

\section{A Reverse Holder Inequality}
\h In this section, we establish a soft version of reverse Holder
inequality for parabolic inequality during the mean curvature flow.
Because of the blow up argument that we will use in the end, we should
keep track of our constants in deriving the estimates, since
we do not want them to blow up after taking the limit of the rescaled flow. Moreover, since we also need certain smallness conditions
to carry out the Moser iteration, we do not want constants quantifying these smallness conditions to vanish after taking the limit of 
the rescaled flow. Here is the convention that we will use.\\
\h {\it Constants such as $C_0, C_1, C_2, \dots$ will be defined. The constants
with alphabetical subscripts $C_{a}, C_{b},\dots$ depend on other
constants with numerical subscripts $C_{0}, C_{1},\dots$ in a
controlled way, the former are increasing functions of the later.  The
$\delta$-constants with numerical subscripts, such as $\delta_{1},
\delta_{2}, \dots$ depend on the constants with numerical subscripts
$C_{0}, C_{1}, \dots$, the former are decreasing functions of the
latter.  We will use those facts in the
blow up argument in section \ref{thmproof}.}\\

We start with the differential inequality
\begin{equation}
 (\frac{\partial}{\partial t}-\Delta ) v\leq fv, ~v\geq 0
\label{keyeq}
\end{equation}
where the function $f$ has bounded $L^{q}(M\times [0, T))$-norm with $q \ge \frac{n+2}{2}$.
Let $\eta(t,x)$ be a smooth function with the property that $\eta(0, x) = 0$ for all $x$. 
\begin{lemma}
Let 
\begin{equation}
 C_{0} \equiv C_{0}(q)= ||f||_{L^q(M\times[0,T))},\qquad  C_{1} = (1 +\norm{H}^{n+2}_{L^{n + 2}(M\times [0, T))})^{\frac{n}{n + 2}},
\label{Czero}
\end{equation}
$\beta > 1$ be a fixed number and $q>\frac{n +2}{2}$. 
Then there exists a positive constant $C_{a} = C_{a}(n, q, C_{0}, C_{1})$ such that
 \begin{equation}
 ||\eta^2 v^{\beta}||_{L^{(n+2)/n}(M\times[0,T))} 
\leq C_{a}\Lambda(\beta)^{1 + \nu}||v^{\beta}\left(\eta^2 + \abs{\nabla\eta}^2 + 
2\eta \abs{(\frac{\partial}{\partial t}-\Delta)\eta}\right)||_{L^1(M\times[0,T))},
\label{RH}
\end{equation}
where 
\begin{equation}
 \nu =\frac{n + 2}{ 2q -(n +2)},
\end{equation}
and $\Lambda(\beta)$ is a positive constant depending on $\beta$ such that $\Lambda(\beta)\geq 1$ if 
$\beta\geq 2$ (e.g. we can choose $\Lambda(\beta) = 100\beta$).
\label{softRH}
\end{lemma}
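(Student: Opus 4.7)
The plan is to combine a Caccioppoli-type energy estimate for $v^{\beta}$ with the parabolic Sobolev inequality of Proposition \ref{MStime}, and then treat the forcing term $fv$ via H\"older's inequality together with an interpolation-absorption argument that exploits the strict inequality $q > (n+2)/2$.

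First, I would raise the differential inequality $(\partial_t - \Delta) v \leq fv$ to the power $\beta$, obtaining
\begin{equation*}
(\partial_t - \Delta)(v^{\beta}) + \tfrac{4(\beta-1)}{\beta}|\nabla v^{\beta/2}|^2 \leq \beta f v^{\beta}.
\end{equation*}
Multiplying by $\eta^2$, integrating over $M \times [0, t_0]$, performing standard integration by parts, and using $\partial_t d\mu = -H^2 d\mu$, $\eta(0, \cdot) \equiv 0$, and Cauchy-Schwarz to absorb cross terms, one obtains an energy estimate of the form
\begin{equation*}
\max_{0 \leq t_0 \leq T} \int_{M_{t_0}} \eta^2 v^{\beta}\, d\mu + \int_0^T \!\int_{M_t} \eta^2 |\nabla v^{\beta/2}|^2\, d\mu\, dt \leq C \mathcal{R} + C\beta \int_0^T \!\int_{M_t} \eta^2 f v^{\beta}\, d\mu\, dt,
\end{equation*}
where $\mathcal{R}$ denotes the $L^1$ quantity on the right of (\ref{RH}) and $C$ depends only on $n$. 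Applying Proposition \ref{MStime} to $w = \eta v^{\beta/2}$, using the identities $\|w\|^{2(n+2)/n}_{L^{2(n+2)/n}} = \|\eta^2 v^{\beta}\|^{(n+2)/n}_{L^{(n+2)/n}}$ and $|\nabla w|^2 \leq 2|\nabla \eta|^2 v^{\beta} + 2\eta^2|\nabla v^{\beta/2}|^2$, one then arrives at
\begin{equation*}
A := \|\eta^2 v^{\beta}\|_{L^{(n+2)/n}(M \times [0, T))} \leq c_n C_1 \Bigl( C \mathcal{R} + C\beta \int_0^T \!\int_{M_t} \eta^2 f v^{\beta}\, d\mu\, dt \Bigr).
\end{equation*}

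The main difficulty is the residual $\int\!\int \eta^2 f v^{\beta}$ term, which still contains the unknown function $v$. The idea is to bound it by H\"older in space-time by $C_0 \|\eta^2 v^{\beta}\|_{L^{q'}}$, and then interpolate: since the assumption $q > (n+2)/2$ forces $1 < q' < (n+2)/n$, one has
\begin{equation*}
\|\eta^2 v^{\beta}\|_{L^{q'}} \leq \mathcal{R}^{1-\theta} A^{\theta}, \qquad \theta = \tfrac{n+2}{2q} \in (0,1),
\end{equation*}
using $\|\eta^2 v^{\beta}\|_{L^1} \leq \mathcal{R}$. Substituting back and applying Young's inequality with exponents $1/\theta$ and $1/(1-\theta)$ absorbs $\tfrac{1}{2}A$ into the left-hand side, leaving a residual multiplicative factor of order $(c_n C_1 \beta C_0)^{1/(1-\theta)}$. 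Since $1/(1-\theta) = 2q/(2q-(n+2)) = 1+\nu$, this produces exactly the claimed $\Lambda(\beta)^{1+\nu}$-dependence with $\Lambda(\beta) = 100\beta$, and all remaining constants bundle into $C_a = C_a(n, q, C_0, C_1)$.
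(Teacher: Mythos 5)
Your proposal is correct and follows essentially the same route as the paper: Caccioppoli energy estimate, then Proposition~\ref{MStime} applied to $\eta v^{\beta/2}$, then H\"older plus interpolation between $L^1$ and $L^{(n+2)/n}$ to absorb the $\int\!\int \eta^2 f v^\beta$ term. The only presentational differences are that the paper tests the original inequality with $\eta^2 v^{\beta-1}$ rather than first passing to the inequality for $v^\beta$, and it uses the additive form of the interpolation inequality rather than the multiplicative form followed by Young; these are equivalent, and your bookkeeping of $\theta$, $\nu$, and the resulting power of $\Lambda(\beta)$ is accurate (one small caveat: the $C\mathcal{R}$ term in your energy estimate should really carry a $\Lambda(\beta)$ factor to cover $\beta$ close to $1$, since the constant $8\beta/(\beta-1)$ on the $|\nabla\eta|^2$ term blows up there; this does not affect the case $\beta\geq 2$ that is used in the iteration).
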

\begin{remark}
 As will be seen later, we can choose
\begin{equation}
 C_{a}(n, q, C_{0}, C_{1}) = (2c_{n}C_{0}C_{1})^{1+\nu}.
\end{equation}
\end{remark}

\begin{proof}
 We use $\eta^2 v^{\beta-1}$ as a test function in the inequality
\begin{equation*}
 -\Delta v +\frac{\partial v}{\partial t}\leq fv.
\end{equation*}
It follows that, for any $s\in (0, T]$, we have
\begin{equation}
 \int_{0}^{s}\int_{M_{t}}(-\Delta v)\eta^2 v^{\beta-1} d\mu dt + 
\int_{0}^{s}\int_{M_{t}} \frac{\partial v}{\partial t}\eta^2 v^{\beta-1}d\mu dt
\leq  \int_{0}^{s}\int_{M_{t}} \abs{f}\eta^2 v^{\beta}d\mu dt.
\label{int1}
\end{equation}
Note that, by integrating by parts
\begin{equation}
 \int_{M_{t}} (-\Delta v) \eta^2 v^{\beta-1}d\mu  = \int_{M_{t}}2<\nabla v, \nabla\eta>\eta v^{\beta-1} d\mu 
+ (\beta-1)\int_{M_{t}}\eta^2 v^{\beta-2}\abs{\nabla v}^2 d\mu.
\label{int2}
\end{equation}
Using the evolution of the volume form 
\begin{equation*}
 \partial_{t} d\mu = -H^2 d\mu
\end{equation*}
and recalling the properties of $\eta$, we get
\begin{eqnarray}
 \int_{0}^{s}\int_{M_{t}} \frac{\partial v}{\partial t}\eta^2 v^{\beta-1}d\mu dt &=& \frac{1}{\beta}
\int_{0}^{s}\int_{M_{t}} \frac{\partial( v^{\beta})}{\partial t}\eta^2 d\mu dt\nonumber \\&=&
\frac{1}{\beta}\int_{M_{t}}v^{\beta}\eta^{2}d\mu\mid_{0}^{s}-\frac{1}{\beta}\int_{0}^{s}\int_{M_{t}}  v^{\beta}\partial_{t}(\eta^2 d\mu)dt
\nonumber\\
&=& \frac{1}{\beta}\int_{M_{s}}v^{\beta}\eta^{2}d\mu -\frac{1}{\beta}\int_{0}^{s}\int_{M_{t}}  v^{\beta}\left[
2\eta \frac{\partial \eta}{\partial t} - H^2 \right]d\mu dt.
\label{int3}
\end{eqnarray}
Therefore, we deduce from (\ref{int1})-(\ref{int3}) the following inequality
\begin{eqnarray}
\label{ibpart1}
& & \int_{0}^{s}\int_{M_{t}}\left(2<\nabla v, \nabla\eta>\eta v^{\beta-1} + (\beta-1)\eta^2 v^{\beta-2}\abs{\nabla v}^2\right)d\mu dt + 
\frac{1}{\beta}\int_{M_{s}}v^{\beta}\eta^{2}d\mu\\ 
&\leq& \frac{1}{\beta}\int_{0}^{s}\int_{M_{t}}  v^{\beta}
2\eta \frac{\partial \eta}{\partial t}d\mu dt  + 
 \int_{0}^{s}\int_{M_{t}} \abs{f}\eta^2 v^{\beta}d\mu dt. \nonumber
\end{eqnarray}
As will be seen later, because we can get good control of the quantity $(\frac{\partial}{\partial t} -\Delta) \eta $
for suitable choices of $\eta$, it is more convenient to make this 
term appear on the right hand side of (\ref{ibpart1}). Observe that, integrating by parts yields
\begin{eqnarray*}
 \frac{1}{\beta}\int_{0}^{s}\int_{M_{t}}  v^{\beta}
2\eta \frac{\partial \eta}{\partial t}d\mu dt &=& \frac{1}{\beta}\int_{0}^{s}\int_{M_{t}} \left( v^{\beta}
2\eta (\frac{\partial}{\partial t} -\Delta) \eta +  v^{\beta}
2\eta\Delta \eta)\right)d\mu dt\\ &=&
\frac{1}{\beta}\int_{0}^{s}\int_{M_{t}}  \left(v^{\beta}
2\eta (\frac{\partial}{\partial t} -\Delta) \eta  -2\nabla (v^{\beta} \eta)\nabla\eta\right)d\mu dt\\ &=&
\frac{1}{\beta}\int_{0}^{s}\int_{M_{t}}  \left(v^{\beta}
2\eta (\frac{\partial}{\partial t} -\Delta) \eta - 2v^{\beta}\abs{\nabla\eta}^2-
2\beta<\nabla v, \nabla\eta>\eta v^{\beta-1}\right)d\mu dt\\&\leq &\frac{1}{\beta}\int_{0}^{s}\int_{M_{t}}  v^{\beta}
2\eta (\frac{\partial}{\partial t} -\Delta) \eta d\mu dt -\int_{0}^{s}\int_{M_{t}}2\eta<\nabla v,\nabla\eta>v^{\beta-1}d\mu dt.
\end{eqnarray*}
Then (\ref{ibpart1}) implies
\begin{multline}
 \int_{0}^{s}\int_{M_{t}}\left(4<\nabla v, \nabla\eta>\eta v^{\beta-1} + (\beta-1)\eta^2 v^{\beta-2}\abs{\nabla v}^2\right)d\mu dt + 
\frac{1}{\beta}\int_{M_{s}}v^{\beta}\eta^{2}d\mu \\  \leq \frac{1}{\beta}\int_{0}^{s}\int_{M_{t}}  v^{\beta}
2\eta \abs{(\frac{\partial}{\partial t} -\Delta) \eta} d\mu dt + 
 \int_{0}^{s}\int_{M_{t}} \abs{f}\eta^2 v^{\beta}d\mu dt.
\label{ibpart2}
\end{multline}
Using the Cauchy-Schwartz inequality
\begin{equation*}
 \int_{0}^{s}\int_{M_{t}}4<\nabla v, \nabla\eta>\eta v^{\beta-1}d\mu dt \geq 
-2\e^2 \int_{0}^{s}\int_{M_{t}} \eta^{2}v^{\beta-2}\abs{\nabla v}^2 d\mu dt-
\frac{2}{\e^2}\int_{0}^{s}\int_{M_{t}} v^{\beta}\abs{\nabla \eta}^2 d\mu dt
\end{equation*}
we get from (\ref{ibpart2}),
\begin{multline}
 \int_{0}^{s}\int_{M_{t}} (\beta-1-2\e^2)\eta^2 v^{\beta-2}\abs{\nabla v}^2 d\mu dt + 
\frac{1}{\beta}\int_{M_{s}}v^{\beta}\eta^{2}d\mu\\ \leq \frac{1}{\beta}\int_{0}^{s}\int_{M_{t}}  v^{\beta}
2\eta \abs{(\frac{\partial}{\partial t} -\Delta) \eta}d\mu dt  + 
 \int_{0}^{s}\int_{M_{t}} \abs{f}\eta^2 v^{\beta}d\mu dt + \frac{2}{\e^2}\int_{0}^{s}\int_{M_{t}} v^{\beta}\abs{\nabla \eta}^2 d\mu dt.
\label{ibpart3}
\end{multline}
Choosing $\e^2 =\frac{\beta-1}{4}$ and observing that $\abs{\nabla (v^{\beta/2})}^2 =\frac{\beta^2}{4}v^{\beta-2}\abs{\nabla v}^2$
yield
\begin{multline*}
2(1-\frac{1}{\beta})\int_{0}^{s}\int_{M_{t}} \eta^2 \abs{\nabla (v^{\beta/2})}^2 d\mu dt + 
\int_{M_{s}}v^{\beta}\eta^{2}d\mu\\ \leq \int_{0}^{s}\int_{M_{t}}  v^{\beta}
2\eta \abs{(\frac{\partial}{\partial t} -\Delta) \eta}d\mu dt  + 
 \beta\int_{0}^{s}\int_{M_{t}} \abs{f}\eta^2 v^{\beta} d\mu dt
+ \frac{8\beta}{\beta-1}\int_{0}^{s}\int_{M_{t}} v^{\beta}\abs{\nabla \eta}^2d\mu dt.
\end{multline*}
Combining the previous estimate with
\begin{equation*}
 \abs{\nabla(\eta v^{\beta/2})}^2\leq 2\eta^2 \abs{\nabla (v^{\beta/2})}^2 + 2v^{\beta}\abs{\nabla\eta}^2
\end{equation*}
implies
\begin{multline*}
(1-\frac{1}{\beta})\int_{0}^{s}\int_{M_{t}}\abs{\nabla(\eta v^{\beta/2})}^2 d\mu dt + 
\int_{M_{s}}v^{\beta}\eta^{2}d\mu\\ \leq \int_{0}^{s}\int_{M_{t}}  v^{\beta}
2\eta \abs{(\frac{\partial}{\partial t} -\Delta) \eta}d\mu dt  + 
 \beta\int_{0}^{s}\int_{M_{t}} \abs{f}\eta^2 v^{\beta}d\mu dt + 8(\frac{\beta}{\beta-1} +\frac{\beta-1}{\beta})
\int_{0}^{s}\int_{M_{t}} v^{\beta}\abs{\nabla \eta}^2 d\mu dt.
\end{multline*}
It follows that, for some $\Lambda(\beta)\geq 1$ (say $\Lambda (\beta) = 100\beta$ if $\beta\geq 2$),
\begin{multline*}
\int_{0}^{s}\int_{M_{t}}\abs{\nabla(\eta v^{\beta/2})}^2d\mu dt  + 
\int_{M_{s}}v^{\beta}\eta^{2}d\mu\\ \leq \Lambda(\beta)\left(\int_{0}^{s}\int_{M_{t}}  v^{\beta}\left\{
2\eta \abs{(\frac{\partial}{\partial t} -\Delta) \eta}  +  \abs{\nabla \eta}^2\right\}d\mu dt + 
 \int_{0}^{s}\int_{M_{t}} \abs{f}\eta^2 v^{\beta}d\mu dt \right)\\ \leq 
\Lambda(\beta)\left(\int_{0}^{s}\int_{M_{t}}  v^{\beta}\left\{
2\eta \abs{(\frac{\partial}{\partial t} -\Delta) \eta}  + \abs{\nabla \eta}^2\right\}d\mu dt + 
 ||f||_{L^{q}(M\times[0,T))} ||\eta^2 v^{\beta}||_{L^{\frac{q}{q-1}}(M\times[0,T))} \right)\\
= \Lambda(\beta)\left(\int_{0}^{s}\int_{M_{t}}  v^{\beta}\left\{
2\eta \abs{(\frac{\partial}{\partial t} -\Delta) \eta}  +  \abs{\nabla \eta}^2 \right\} d\mu dt+ 
 C_{0}||\eta^2 v^{\beta}||_{L^{\frac{q}{q-1}}(M\times[0,T))} \right) =: A.
\end{multline*}
Consequently,
\begin{equation}
 \max_{0\leq s\leq T}\int_{M_{s}}\eta^2v^{\beta}d\mu\leq A
\end{equation}
and
\begin{equation}
 \int_{0}^{T}\int_{M_{t}}\abs{\nabla(\eta v^{\beta/2})}^2 d\mu dt\leq A.
\end{equation}
We are now in a position to apply Proposition \ref{MStime} to $\eta v^{\beta/2}$ and get the following estimates
\begin{eqnarray*}
 & & ||\eta^2v^{\beta}||^{(n+2)/n}_{L^{(n+2)/n}(M\times[0,T))} = ||\eta v^{\beta/2}||^{2(n+2)/n}_{L^{2(n+2)/n}(M\times[0,T))}\\ 
& &\leq c_{n}\max_{0\leq t\leq T}
\norm{\eta v^{\beta/2}}^{4/n}_{L^{2}(M_{t})}\left (||\nabla(\eta v^{\beta/2})||^2_{L^2(M\times[0,T))} +  \max_{0\leq t\leq T}
\norm{\eta v^{\beta/2}}^{2}_{L^{2}(M_{t})} ||H||^{n+2}_{L^{n+2}(M\times[0,T))}\right)\\
& & \leq c_{n}A^{2/n} \left (A + A ||H||^{n +2}_{L^{n+2}(M\times[0,T))}\right)
= c_{n}A^{\frac{n + 2}{n}} (1 + ||H||^{n+2}_{L^{n+2}(M\times[0,T))}).
\end{eqnarray*}
Let $S := M\times [0, T)$ and let the norm  $||\cdot||_{L^p(M\times[0,T))}$ be shortly denoted by $||\cdot||_{L^p(S)}$. Then the previous estimate,
using a definition of $A$, can be rewritten as
\begin{eqnarray}
 \norm{\eta^2 v^{\beta}}_{L^{\frac{n + 2}{n}}(S)}
 &\leq& c_{n}A(1 +\norm{H}^{n+2}_{L^{n + 2}(S)})^{\frac{n}{n + 2}} \nonumber \\
 &=& c_{n}C_{1}\Lambda (\beta )\left(\int_{0}^{T}\int_{M_{t}}  v^{\beta}\left\{
2\eta \abs{(\frac{\partial}{\partial t} -\Delta) \eta}  + \abs{\nabla \eta}^2\right\}d\mu dt + 
 C_{0}\norm{\eta^2 v^{\beta}}_{L^{\frac{q}{q-1}}(S)} \right).
\label{superc}
\end{eqnarray}
Since $1 < \frac{q}{q-1} < \frac{n+2}{n}$, by using the interpolation inequality
\begin{equation*}
 \norm{\eta^2 v^{\beta}}_{L^{\frac{q}{q-1}}(S)} \leq \e \norm{\eta^2 v^{\beta}}_{L^{\frac{ n +2}{n}}(S)} + \e^{-\nu}
\norm{\eta^2 v^{\beta}}_{L^{1}(S)}
\end{equation*}
in (\ref{superc}), for $ \nu =\frac{n + 2}{ 2q -(n +2)}$,  one gets 
\begin{eqnarray*}
 & &[1- c_{n}\Lambda(\beta)C_{0}C_{1}\e]\norm{\eta^2 v^{\beta}}_{L^{\frac{n +2}{n}}(S)}\\ 
 &\leq& c_{n}C_{1}\Lambda (\beta)\left[
C_{0}\e^{-\nu}\norm{\eta^2 v^{\beta}}_{L^{1}(S)} + \norm{v^{\beta}(\abs{\nabla\eta}^2 + 
2\eta (\frac{\partial}{\partial t}-\Delta)\eta)}_{L^{1}(S)}\right].
\end{eqnarray*}
If we choose $ \e=\frac{1}{2\Lambda (\beta)c_{n}C_{0}C_{1}}$, then
\begin{eqnarray*}
 & &\norm{\eta^2 v^{\beta}}_{L^{\frac{n +2}{n}}(S)} \\
&\leq& 2c_{n}C_{1}\Lambda (\beta)\left[C_{0}(2\Lambda (\beta)c_{n}C_{0}C_{1})^{\nu}\norm{\eta^2 v^{\beta}}_{L^{1}(S)} 
+ \norm{v^{\beta}(\abs{\nabla\eta}^2 + 
2\eta (\frac{\partial}{\partial t}-\Delta)\eta)}_{L^{1}(S)}\right]\\
&\leq& C_{a}(n, q, C_{0}, C_{1})\Lambda(\beta)^{1 + \nu} \norm{v^{\beta}(\eta^2 + \abs{\nabla\eta}^2 +  2\eta (\frac{\partial}{\partial t}-\Delta)\eta)}_{L^{1}(S)},
\end{eqnarray*}
where  $C_{a}(n, q, C_{0}, C_{1}) = (2c_{n}C_{0}C_{1})^{1+\nu}$.
In conclusion, we get a soft reverse Holder inequality
\begin{equation*}
||\eta^2 v^{\beta}||_{L^{\frac{n+2}{n}}(S)}\le C_a(n,q,C_0,C_1)\Lambda(\beta)^{1+\nu}||v^{\beta}\left(\eta^2 + \abs{\nabla\eta}^2 + 
2\eta \abs{(\frac{\partial}{\partial t}-\Delta)\eta}\right)||_{L^1(S)}.
\end{equation*}
\end{proof}

\section{The Moser Iteration Process for the Supercritical Case}

We will use the notation from previous sections.
Consider the function $v$, which is a solution to (\ref{keyeq}), where $f\in  L^q(S)$. Assume $q > \frac{n+2}{n}$ which corresponds to a supercritical case. 
We will show in this case that an $L^{\infty}$-norm of $v$ over a smaller set can be bounded by an $L^{\beta}$-norm of $v$ on a bigger set, 
where $\beta \ge 2$. Fix $x_{0}\in \RR^{n +1}$. Consider the following sets in space and time,
\begin{equation*}
 D = \cup_{0\le t\le 1} (B(x_{0}, 1)\cap M_t);\,\,\, D^{'} = \cup_{\frac{1}{12}\le t\leq 1}(B(x_{0}, \frac{1}{2})\cap M_t).
\end{equation*}
Let us denote by
\begin{equation*}
 D_{k} = \cup_{t_k\le t\le 1}(B(x_{0}, r_{k})\cap M_t)
\end{equation*}
where
\begin{equation*}
 r_{k} = \frac{1}{2} + \frac{1}{2^{k +1}}; \,\,\,t_{k} = \frac{1}{12}(1-\frac{1}{4^{k}}).
\end{equation*}
Then,
\begin{equation*}
 \rho_{k}:= r_{k-1}-r_{k} = \frac{1}{2^{k +1}}; \,\,\, t_{k}-t_{k-1} = \rho^{2}_{k}.
\end{equation*}
Let us choose a test function $\eta_{k} = \eta_{k}(t,x)$, following Ecker \cite{Ecker95}, of the form
\begin{equation}
 \eta_{k}(t,x) =\varphi_{\rho_{k}}(t)\times \psi_{\rho_{k}}(\abs{x-x_{0}}^2).
\label{testk}
\end{equation}
In (\ref{testk}), the function $\varphi_{\rho_{k}}$ satisfies
\begin{equation*}
 \varphi_{\rho_{k}}(t)= 
\left\{
 \begin{alignedat}{1}
1 \h&\text{if}~ t_{k}\leq t\leq 1, \\\
\in [0, 1] \h &\text{if}~ t_{k-1}\leq t\leq t_{k}, \\\
0\h &\text{if}~ t\leq t_{k-1}.
 \end{alignedat} 
 \right.
 \end{equation*}
and
\begin{equation*}
 \abs{\varphi^{'}_{\rho_{k}}}(t)\leq \frac{c_{n}}{\rho^2_{k}};
\end{equation*}
whereas in (\ref{testk}), the function 
$\psi_{\rho_{k}}(s)$ satisfies
\begin{equation*}
 \psi_{\rho_{k}}(s)= 
\left\{
 \begin{alignedat}{1}
0 \h&\text{if}~  s\geq r_{k-1}^2, \\\
\in [0, 1] \h &\text{if}~ r^2_{k}\leq s\leq r^{2}_{k-1}, \\\
1\h &\text{if}~ s\leq r^2_{k}.
 \end{alignedat} 
 \right.
 \end{equation*}
and
\begin{equation*}
 \abs{\psi^{'}_{\rho_{k}}}(s)\leq \frac{c_{n}}{\rho^2_{k}}.
\end{equation*}
We have
\begin{equation}
 0\leq \eta_{k}\leq 1; \eta_{k}\equiv 1~\text{in}~D_{k}; \eta_{k}\equiv 0~\text{outside}~D_{k-1}.
\label{etaDk}
\end{equation}
\h Using the following identity for the mean curvature flow derived in Brakke \cite{Brakke}
\begin{equation}
 (\frac{d}{dt}-\Delta) \abs{x -x_{0}}^2 = -2n~\forall x\in M_{t},
\end{equation}
we can verify the following
\begin{lemma}
\begin{equation}
 \sup_{t\in [0, 1]}\sup_{x\in M_{t}} \left(\eta^2_{k}(t,x) + \abs{\nabla\eta_{k}(t,x)}^2 + 
2\eta_{k} (t,x)\abs{(\frac{\partial}{\partial t}-\Delta)\eta_{k}(t,x)}\right)\leq \frac{c_{n}}{\rho^2_{k}} = c_{n}4^{k +1}.
\end{equation}
\label{heatest}
\end{lemma}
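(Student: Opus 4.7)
The plan is to exploit the product structure $\eta_k(t,x)=\varphi_{\rho_k}(t)\,\psi_{\rho_k}(|x-x_0|^2)$ and bound each of the three summands $\eta_k^2$, $|\nabla\eta_k|^2$, $2\eta_k|(\partial_t-\Delta)\eta_k|$ pointwise by a dimensional multiple of $\rho_k^{-2}$. The key analytic input will be Brakke's identity $(\partial_t-\Delta)|x-x_0|^2=-2n$ stated just above the lemma, which collapses the parabolic estimate on $\eta_k$ to an essentially algebraic computation involving the prescribed cutoff bounds and the ambient quantity $|\nabla|x-x_0|^2|^2$.

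The first two summands are mechanical. Since $0\le\eta_k\le 1$ and $\rho_k=2^{-(k+1)}\le 1/2$, we trivially have $\eta_k^2\le 1\le c_n\rho_k^{-2}$. For the gradient, the chain rule gives
\[
\nabla\eta_k \;=\; 2\,\varphi_{\rho_k}(t)\,\psi'_{\rho_k}(|x-x_0|^2)\,(x-x_0)^T,
\]
where $(x-x_0)^T$ denotes the tangential component of $x-x_0$ along $M_t$. On the support of $\psi'_{\rho_k}$ we have $|x-x_0|\le r_{k-1}\le 1$, so $|\nabla|x-x_0|^2|^2\le 4$, and the prescribed bound on $\psi'_{\rho_k}$ converts this into $|\nabla\eta_k|^2\le c_n\rho_k^{-2}$.

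The parabolic term is where the content sits. Since $\varphi_{\rho_k}$ is spatially constant, no cross terms appear when $\partial_t-\Delta$ hits the product, and the chain rule yields
\[
(\partial_t-\Delta)\eta_k \;=\; \varphi'_{\rho_k}\,\psi_{\rho_k} \;+\; \varphi_{\rho_k}\!\left(\psi'_{\rho_k}\,(\partial_t-\Delta)|x-x_0|^2 \;-\; \psi''_{\rho_k}\,|\nabla|x-x_0|^2|^2\right).
\]
At this point Brakke's identity replaces $(\partial_t-\Delta)|x-x_0|^2$ by the harmless constant $-2n$. Combining the prescribed bounds on $|\varphi'_{\rho_k}|$, $|\psi'_{\rho_k}|$, $|\psi''_{\rho_k}|$ with $|\nabla|x-x_0|^2|^2\le 4$ and $\eta_k\le 1$ then produces $2\eta_k|(\partial_t-\Delta)\eta_k|\le c_n\rho_k^{-2}$, and summing the three pointwise bounds closes the estimate. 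The only real obstacle is the $\psi'_{\rho_k}\Delta|x-x_0|^2$ piece: the naive expansion $\Delta|x-x_0|^2=2n-2H\langle x-x_0,\nu\rangle$ introduces an a priori uncontrolled factor of $H$, and it is precisely the cancellation of this term against the corresponding $-2H\langle x-x_0,\nu\rangle$ coming from $\partial_t|x-x_0|^2$ that rescues the argument and makes the bound independent of the flow geometry.
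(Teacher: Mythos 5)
Your approach is correct and is exactly what the paper intends (the paper gives no explicit proof of Lemma~\ref{heatest}, only points to Brakke's identity), and you correctly identify the essential point: the $H\langle x-x_0,\nu\rangle$ term from $\Delta|x-x_0|^2$ is cancelled by the one from $\partial_t|x-x_0|^2$, so the heat operator applied to $|x-x_0|^2$ produces only the bounded constant $-2n$.

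One numerical nit worth flagging: with the paper's literal bound $|\psi'_{\rho_k}|\le c_n/\rho_k^2$, the gradient term gives $|\nabla\eta_k|^2\le 4\varphi^2(\psi')^2|(x-x_0)^T|^2\le c_n/\rho_k^4$, not $c_n/\rho_k^2$, so your sentence ``the prescribed bound on $\psi'_{\rho_k}$ converts this into $|\nabla\eta_k|^2\le c_n\rho_k^{-2}$'' is not an arithmetic consequence of what is written. What the standard cutoff construction actually yields is $|\psi'_{\rho_k}|\le c_n/\rho_k$, since $\psi_{\rho_k}$ transitions over an $s$-window of width $r_{k-1}^2-r_k^2=(r_{k-1}+r_k)\rho_k\ge\rho_k$; with that tighter bound the conclusion $c_n/\rho_k^2$ follows. (Either the paper has a typo in the $\psi'$ bound or the lemma should read $\rho_k^{-4}$; either way the Moser iteration downstream converges, but one should be consistent.) You also invoke a bound on $\psi''_{\rho_k}$, which the paper never states --- it is the standard $|\psi''_{\rho_k}|\le c_n/\rho_k^2$, but you should say so rather than call it ``prescribed.''
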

\h The main result of this section is the following Harnack inequality in the supercritical case.
\begin{lemma}
Consider the equation (\ref{keyeq}) with $T\geq 1$. Let us denote by
$ \lambda = \frac{n +2}{n}$, let $q>\frac{n +2}{2}$ and $\beta\geq 2$. Then, there exists a constant $C_{b}= C_{b}(n, q, \beta, C_{0}, C_{1})$ such that
\begin{equation}
 \norm{v}_{L^{\infty}(D^{'})}\leq C_{b}(n, q, \beta, C_{0}, C_{1}) \norm{v}_{L^{\beta}(D)},
\label{firstM}
\end{equation}
and
\begin{equation}
 \norm{v}_{L^{\infty}(D^{'})}\leq C_{b}(n, q, \beta, C_{0}, C_{1}) \norm{v}_{L^{\beta\lambda^{k}}(D_{k})}~\forall k\geq 1.
\label{secondM}
\end{equation}
In the above inequalities, $C_{0}$ and $C_{1}$ are defined by (\ref{Czero}).
\end{lemma}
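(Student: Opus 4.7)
The plan is a Moser iteration built on the reverse H\"older inequality (\ref{RH}) applied with the sequence of cutoffs $\eta_k$ defined above and the geometrically increasing exponents $\beta_k := \beta\lambda^k$ for $k=0,1,2,\ldots$. At the $k$-th stage I plug $\eta=\eta_k$ and exponent $\beta_{k-1}$ into Lemma \ref{softRH}. Since $\eta_k\equiv 1$ on $D_k$ and $\lambda\beta_{k-1}=\beta_k$, the left-hand side of (\ref{RH}) dominates $\|v\|_{L^{\beta_k}(D_k)}^{\beta_{k-1}}$. The bracketed factor on the right is, by Lemma \ref{heatest}, bounded pointwise by $c_n 4^{k+1}$ and supported inside $D_{k-1}$. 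Taking $\beta_{k-1}$-th roots yields the one-step recursion
$$\|v\|_{L^{\beta_k}(D_k)} \le \bigl(C_a c_n 4^{k+1}\Lambda(\beta_{k-1})^{1+\nu}\bigr)^{1/\beta_{k-1}} \|v\|_{L^{\beta_{k-1}}(D_{k-1})}.$$

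Iterating from $k=1$ to $K$ and sending $K\to\infty$, the exponent $\beta_K\to\infty$ and the cylinders $D_K$ decrease to $D'$ (because $r_K\to 1/2$ and $t_K\to 1/12$), so the left-hand side converges to $\|v\|_{L^\infty(D')}$. The accumulated prefactor is the infinite product $\prod_{k\ge 1}\bigl(C_a c_n 4^{k+1}\Lambda(\beta_{k-1})^{1+\nu}\bigr)^{1/\beta_{k-1}}$, whose logarithm equals $\sum_{k\ge 1}\beta_{k-1}^{-1}\bigl(\log(C_a c_n)+(1+\nu)\log(100\beta\lambda^{k-1})+(k+1)\log 4\bigr)$. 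Since $\beta_{k-1}^{-1}=\beta^{-1}\lambda^{-(k-1)}$ decays geometrically while the numerator grows only linearly in $k$, the series converges and defines the constant $C_b=C_b(n,q,\beta,C_0,C_1)$ of (\ref{firstM}). Inequality (\ref{secondM}) is obtained by the same iteration restarted with exponent $\beta\lambda^k$ and initial cylinder $D_k$ in place of $\beta$ and $D_0=D$.

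The main point requiring care is tracking how $C_b$ depends on its arguments in the manner dictated by the paper's convention paragraph: namely, as an increasing function of $C_0$ and $C_1$. This follows because $C_a=(2c_nC_0C_1)^{1+\nu}$ is manifestly increasing in $C_0,C_1$, and positive powers and products preserve this monotonicity in the telescoping product. A secondary housekeeping point is to verify that each $\eta_k$ satisfies the hypothesis $\eta_k(0,\cdot)\equiv 0$ of Lemma \ref{softRH}; this holds since $\varphi_{\rho_k}(t)=0$ for $t\le t_{k-1}$ and $t_{k-1}\ge 0$, so that the reverse H\"older inequality is applicable at every step of the iteration. No genuine obstacle is expected beyond this bookkeeping, since both ingredients — (\ref{RH}) and the cutoff bound of Lemma \ref{heatest} — are tailor-made for the scheme.
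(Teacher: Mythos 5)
Your proposal is correct and follows essentially the same route as the paper: apply the reverse H\"older inequality of Lemma \ref{softRH} with cutoff $\eta_k$ and running exponent $\beta_{k-1}=\beta\lambda^{k-1}$, use Lemma \ref{heatest} to bound the cutoff term by $c_n 4^{k+1}$ supported in $D_{k-1}$, take $\beta_{k-1}$-th roots, iterate, and control the telescoping product via a convergent series. The only cosmetic difference is that the paper first records the one-step inequality at level $\beta$ and then substitutes $\beta\mapsto\lambda^{k-1}\beta$, whereas you substitute from the start; your closing observations about $\eta_k(0,\cdot)\equiv 0$, about $D'\subset D_k$, and about the monotonicity of $C_b$ in $C_0,C_1$ are the same bookkeeping the paper performs.
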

\begin{proof}
If $\beta\geq 2$, then let $\Lambda(\beta) =100\beta$. Note that $\eta_{k}\equiv 1$ on $D_{k}$ and $\eta_k \equiv 0$ outside $D_{k-1}$ 
by (\ref{etaDk}). Recall that $S: = M \times [0, T)$. We have
\begin{eqnarray*}
 \norm{v^{\beta}}_{L^{\frac{n +2}{n}}(D_{k})}  &\leq&  \norm{\eta_{k}^2 v^{\beta}}_{L^{\frac{n +2}{n}}(S)}\\
&\leq& C_{a}(n, q, C_{0}, C_{1})\Lambda(\beta)^{1 + \nu}\int_{0}^{T}\int_{M_{t}}v^{\beta}\left(\eta_{k}^2 + \abs{\nabla\eta_{k}}^2 + 
2\eta_{k} \abs{(\frac{\partial}{\partial t}-\Delta)\eta_{k}}\right)d\mu dt\\
&\leq & c_{n}4^{k +1}C_{a}(n, q, C_{0}, C_{1})\Lambda(\beta)^{1 + \nu}\int_{D_{k-1}} v^{\beta} d\mu dt\\
&=&C_{z}(n, q, C_{0}, C_{1}) 4^{k-1} \beta^{1 + \nu}\norm{v^{\beta}}_{L^{1}(D_{k-1})},
\end{eqnarray*}
where $C_{z}(n, q, C_{0}, C_{1}) := 4^2\times 100^{1+\nu}c_{n}C_{a}(n, q, C_{0}, C_{1})$. In the above chain of inequalities, the second line follows from Lemma \ref{softRH}, the third line results from Lemma \ref{heatest} and the 
definition of $\rho_{k}$. For simplicity, let us denote  by $C_{z} = C_{z}(n, q, C_{0}, C_{1}).$ Then the previous estimate can be written in the following form
\begin{equation}
 \norm{v}_{L^{\frac{n +2}{n}\beta}(D_{k})}\leq  C^{\frac{1}{\beta}}_{z}4^{\frac{k-1}{\beta}} 
\beta^{\frac{1 + \nu}{\beta}} \norm{v}_{L^{\beta}(D_{k-1})}.
\label{firstRH}
\end{equation}
This form of the reverse Holder inequality is the key estimate  for our Moser iteration process.  Let $ \lambda =\frac{n +2}{n}$. Then, replacing $\beta$ by $\lambda^{k-1}\beta$ in (\ref{firstRH}), one obtains
\begin{equation}
 \norm{v}_{L^{\beta \lambda^{k}}(D_{k})}\leq  C^{\frac{1}{\beta\lambda^{k-1}}}_{z}4^{\frac{k-1}{\beta\lambda^{k-1}}} 
(\lambda^{k-1}\beta)^{\frac{1 + \nu}{\beta\lambda^{k-1}}} \norm{v}_{L^{\beta\lambda^{k-1}}(D_{k-1})}.
\end{equation}
It follows by iteration that for all $k_{0}\geq 0$
 \begin{eqnarray}
  \norm{v}_{L^{\beta\lambda^{k}}(D_{k})} &\leq&  \left(C^{\frac{1}{\beta}}_{z}\beta^{\frac{1+\nu}{\beta}}\right)^{\sum^{k-1}_{j=k_{0}}
\frac{1}{\lambda^{j}}}
(4^{\frac{1}{\beta}}\cdot \lambda^{\frac{1+\nu}{\beta}})^{\sum^{k-1}_{j=k_{0}}
\frac{j}{\lambda^{j}}} 
\norm{v}_{L^{\beta\lambda^{k_{0}}}(D_{k_{0}})} \nonumber \\
&\leq& C_{b}(n, q, \beta, C_{0}, C_{1})\norm{v}_{L^{\beta\lambda^{k_{0}}}(D_{k_{0}})},
\label{secondRH}
 \end{eqnarray}
where we choose 
\begin{equation}
\label{eq-Cb}
C_{b}(n, q, \beta, C_{0}, C_{1}) = (4\lambda^{1+ \nu} C_z\beta^{1+\nu})^{\frac{n^2}{\beta}},
\end{equation}
since
$$ \sum^{\infty}_{j=0}
\frac{j}{\lambda^{j}} =\frac{\lambda}{(\lambda -1)^2} = \frac{n(n+2)}{4}\leq n^2.$$

Note that  $D_{0} = D$ and $D^{'}\subset D_{k}$ for all positive integer $k$. Thus
\begin{equation}
 \norm{v}_{L^{\beta\lambda^{k}}(D^{'})}\leq C_{b}(n, q, \beta, C_{0}, C_{1}) \norm{v}_{L^{\beta}(D_{0})} = 
C_{b}(n, q, \beta, C_{0}, C_{1}) \norm{v}_{L^{\beta}(D)}.
\end{equation}
Letting $k\rightarrow \infty$, we find that
\begin{equation}
 \norm{v}_{L^{\infty}(D^{'})}\leq C_{b}(n, q, \beta, C_{0}, C_{1}) \norm{v}_{L^{\beta}(D)}.
\end{equation}
If we fix a $k \ge 1$, doing the above iteration process for $l\ge k$, letting $l\to \infty$ we obtain (\ref{secondM}).
\end{proof}

\section{The Moser Iteration Process for the Critical Case}
\h In this section we will deal with the differential inequality
\begin{equation}
 (\frac{\partial}{\partial t}-\Delta ) v\leq fv, ~v\geq 0,
\end{equation}
where the function $f$ is bounded in the $L^{\frac{n +2}{2}}(S)$
norm. In this case $\frac{q}{q-1} =\frac{n +2}{n}$. Thus, we cannot
absorb the term $c_{n}C_{1}\Lambda(\beta)C_{0}\norm{\eta^2
v^{\beta}}_{L^{\frac{q}{q-1}}(S)}$ appearing on the right hand side
of estimate (\ref{superc}) into the left hand side of the same equation, which was
the crucial estimate in obtaining the reverse H\"older inequality
(\ref{firstRH}). This inequality is the key estimate for performing the Moser iteration
in the supercritical case.  However if we assume a smallness condition
on $C_{0} = C_{0}(\frac{n +2}{2})$, we have the following lemma.
\begin{lemma}
\label{lem-Cc}
Let $\beta$ be a constant greater than $1$. Then there exist two constants $\delta_{1}(n, \beta, C_{1})$ and $C_{c}(n,\beta, C_{1})$ such that
 if
\begin{equation}
  \norm{f}_{L^{\frac{n +2}{2}}(D)}\leq \delta_{1} (n,\beta, C_{1})
\label{small1}
\end{equation}
then
\begin{equation}
 \norm{v}_{L^{\frac{n +2}{2}\beta}(D_{1})}\leq C_{c}(n, \beta, C_{1})\norm{v}_{L^{\beta}(D)}.
\label{critest}
\end{equation}
\label{smalllem}
\end{lemma}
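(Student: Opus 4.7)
The plan is to revisit the key estimate (\ref{superc}) from the proof of Lemma \ref{softRH}, applied with the specific cutoff $\eta = \eta_{1}$ introduced in Section 5, and to exploit its support properties together with the smallness hypothesis. For $q = \frac{n+2}{2}$ we have $\frac{q}{q-1} = \frac{n+2}{n} = \lambda$, so the ``bad'' term $C_{0}\|\eta^2 v^{\beta}\|_{L^{q/(q-1)}(S)}$ on the right of (\ref{superc}) has the very same $L^p$-exponent as the left-hand side. Interpolation à la Lemma \ref{softRH} is unavailable because $\nu = \frac{n+2}{2q-(n+2)}$ is not finite when $q = \frac{n+2}{2}$, so the term must be absorbed directly.

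First I would re-derive (\ref{superc}) for the specific choice $\eta = \eta_{1}$, noting that $\eta_{1}\equiv 0$ outside $D_{0}=D$. When Hölder's inequality is applied to $\int_{S}|f|\eta_{1}^{2}v^{\beta}\,d\mu\,dt$, the support condition allows me to replace the global constant $\|f\|_{L^{(n+2)/2}(S)}$ by the local one $\|f\|_{L^{(n+2)/2}(D)} \leq \delta_{1}$. Choosing $\delta_{1}=\delta_{1}(n,\beta,C_{1})$ small enough that $c_{n}C_{1}\Lambda(\beta)\,\delta_{1}\leq \tfrac{1}{2}$, the offending term is absorbed into the left-hand side and yields
\[
\|\eta_{1}^{2}v^{\beta}\|_{L^{(n+2)/n}(S)} \leq 2c_{n}C_{1}\Lambda(\beta)\int_{0}^{T}\!\!\int_{M_{t}} v^{\beta}\bigl(2\eta_{1}|(\partial_{t}-\Delta)\eta_{1}| + |\nabla\eta_{1}|^{2}\bigr)\,d\mu\,dt.
\]
Second, I apply Lemma \ref{heatest} (with $k=1$, so $\rho_{1}=1/4$) to bound the bracketed expression pointwise by $c_{n}/\rho_{1}^{2}=16c_{n}$, and use $\eta_{1}\equiv 0$ outside $D$ and $\eta_{1}\equiv 1$ on $D_{1}$ to restrict the integral to $D$ and drop $\eta_{1}$ on the left. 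This gives $\|v^{\beta}\|_{L^{(n+2)/n}(D_{1})} \leq C(n,\beta,C_{1})\,\|v^{\beta}\|_{L^{1}(D)}$, and taking $\beta$-th roots produces the claimed reverse Hölder inequality (\ref{critest}) with $C_{c}(n,\beta,C_{1}) = C(n,\beta,C_{1})^{1/\beta}$.

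The main obstacle is the absorption step: because the coefficient multiplying $\|\eta^{2}v^{\beta}\|_{L^{(n+2)/n}(S)}$ on the right of (\ref{superc}) is $c_{n}C_{1}\Lambda(\beta)C_{0}$, a naive hypothesis $\|f\|_{L^{(n+2)/2}(S)}\leq\delta_{1}$ would force $\delta_{1}$ to depend on $C_{0}$, which is circular. The fix is to observe that $\eta_{1}$ is supported in $D$, so the $C_{0}$ that actually appears after applying Hölder is $\|f\|_{L^{(n+2)/2}(D)}$, not the global norm. With this localization the smallness assumption (\ref{small1}) suffices to absorb, and the dependences of $\delta_{1}$ and $C_{c}$ on $(n,\beta,C_{1})$ alone are clean. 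The remaining estimates are routine applications of Lemma \ref{softRH}, Lemma \ref{heatest}, and support considerations, paralleling the supercritical iteration of the previous section but performed only at the single step $D\rightsquigarrow D_{1}$.
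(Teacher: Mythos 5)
Your proposal is correct and matches the paper's own proof step for step: reuse the key estimate (\ref{superc}) with $\eta=\eta_1$, note that the support of $\eta_1$ lets you replace the global constant $C_0$ by $\norm{f}_{L^{(n+2)/2}(D)}$, absorb the critical term via the smallness hypothesis with $\delta_1 = \frac{1}{2c_n C_1\Lambda(\beta)}$, bound the cutoff terms by Lemma \ref{heatest} with $k=1$, and take $\beta$-th roots. One small remark: your derivation (like the paper's own) yields the exponent $\frac{n+2}{n}\beta$ on the left-hand side, which is what Lemma \ref{MoserMC} actually invokes via (\ref{eq-H-sc}); the $\frac{n+2}{2}$ appearing in the statement of (\ref{critest}) is a typographical slip, and your ``circularity'' worry about assuming global smallness of $\norm{f}_{L^{(n+2)/2}(S)}$ is not a genuine obstruction (that assumption would also give a $\delta_1$ depending only on $n,\beta,C_1$) --- the real reason for localizing to $D$ is that only the local norm tends to zero under rescaling in the blow-up argument of Section \ref{thmproof}.
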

\begin{proof}
We will use (\ref{superc}) with $\eta =\eta_{1}$, keeping  in mind that the constant $C_{0}$ appearing on the right hand 
side of (\ref{superc})  can be chosen to be $\norm{f}_{L^{q}(D)}$, where $q =\frac{n +2}{2}$. 
Since $\eta_{1}\equiv 0$ outside $D$, we see that the term $\int_{0}^{s}\int_{M_{t}}\abs{f} \eta^2_{1}v^{\beta} d\mu dt$ can be bounded from
above by 
\begin{equation*}
 \int_{D}\abs{f} \eta^2_{1}v^{\beta} d\mu dt \leq \norm{f}_{L^{q}(D)}\norm{\eta_{1}^2 v^{\beta}}_{L^{\frac{q}{q-1}}(D)}.
\end{equation*}
Thus, for $\eta =\eta_{1}$, the term $C_{0}\norm{\eta_{1}^2 v^{\beta}}_{L^{\frac{q}{q-1}}(M\times [0, T))}$ in $A$ (defined 
in (\ref{superc})) can be replaced by
$\norm{f}_{L^{q}(D)}\norm{\eta_{1}^2 v^{\beta}}_{L^{\frac{q}{q-1}}(D)}.$

Consequently, as $q= \frac{n + 2}{2}$ and $\frac{q}{q-1} =\frac{n +2}{n}$, we have
\begin{multline}
 \norm{\eta_{1}^2 v^{\beta}}_{L^{\frac{n + 2}{n}}(D)}\\ \leq
c_{n}C_{1}\Lambda (\beta )\left(\int_{0}^{T}\int_{M_{t}}  v^{\beta}\left\{
2\eta_{1} \abs{(\frac{\partial}{\partial t} -\Delta) \eta_{1}}  + \abs{\nabla \eta_{1}}^2\right\} d\mu dt + 
 \norm{f}_{L^{\frac{n +2}{2}}(D)}\norm{\eta_{1}^2 v^{\beta}}_{L^{\frac{n+2}{n}}(D)} \right).
\label{criticalc}
\end{multline}

If we choose 
\begin{equation}
\label{eq-delta1}
\delta_{1}(n, \beta, C_{1}) = \frac{1}{2c_{n}C_{1}\Lambda (\beta)},
\end{equation}
then  from (\ref{small1}) and (\ref{criticalc}) we get
\begin{eqnarray*}
 \norm{\eta_{1}^2 v^{\beta}}_{L^{\frac{n + 2}{n}}(D)}\leq
c_{n}C_{1}\Lambda (\beta )\left(\int_{0}^{T}\int_{M_{t}}  v^{\beta}\left\{
2\eta_{1} \abs{(\frac{\partial}{\partial t} -\Delta) \eta_{1} } + \abs{\nabla \eta_{1}}^2\right\}d\mu dt \right).
\end{eqnarray*}
By Lemma \ref{heatest},
\begin{equation}
 \sup \left(\eta^2_{1} + \abs{\nabla\eta_{1}}^2 + 
2\eta_{1} \abs{(\frac{\partial}{\partial t}-\Delta)\eta_{1}}\right)\leq \frac{c_{n}}{\rho^2_{1}} = c_{n}.
\end{equation}
Hence
\begin{equation}
  \norm{\eta_{1}^2 v^{\beta}}_{L^{\frac{n + 2}{n}}(D)} \leq c_{n}C_{1}\Lambda(\beta)
\norm{v^{\beta}}_{L^{1}(D)}.
\end{equation}
Now, recalling that $\eta_{1}\equiv 1$ on $D_{1}$, we have
\begin{eqnarray*}
  \norm{ v}^{\beta}_{L^{\frac{n + 2}{n}\beta}(D_{1})} =  \norm{v^{\beta}}_{L^{\frac{n + 2}{n}}(D_{1})}\leq  
\norm{\eta_{1}^2 v^{\beta}}_{L^{\frac{n + 2}{n}}(D)}\leq c_{n}C_{1}\Lambda(\beta)\norm{v^{\beta}}_{L^{1}(D)}
= c_{n}C_{1}\Lambda(\beta)\norm{v}^{\beta}_{L^{\beta}(D)}.
\end{eqnarray*}
Thus
\begin{equation*}
 \norm{v}_{L^{\frac{n +2}{2}\beta}(D_{1})}\leq C_{c}(n, \beta, C_{1})\norm{u}_{L^{\beta}(D)},
\end{equation*}
where 
\begin{equation}
\label{eq-Cc}
C_{c}(n,\beta, C_{1}) := \left(c_{n}C_{1}\Lambda(\beta)\right)^{\frac{1}{\beta}}.
\end{equation}

\end{proof}
\section{Bounding the Mean Curvature}

\h In this section we will bound the mean curvature along the mean
curvature flow in terms of $||H||_{L^{n+2}(S)}$, having that the
second fundamental form is uniformly bounded from below and an extra
smallness assumption. First we derive a differential inequality for
the modified mean curvature $\hat{H}$ defined below.
\begin{lemma}
 Suppose that
\begin{equation}
 h_{ij}\geq -B g_{ij}.
\end{equation}
Then, for 
\begin{equation*}
 \hat{H} := H + nB\geq 0
\end{equation*}
one has
\begin{equation}
 \partial_{t}\hat{H}\leq \Delta \hat{H} + \hat{H}^3 + nB^2 \hat{H}.
\end{equation}
\label{lowerH}
\end{lemma}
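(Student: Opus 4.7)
The plan is to reduce this parabolic inequality to a pointwise algebraic estimate on the principal curvatures. Since $nB$ is a constant, Huisken's evolution equation $\partial_t H=\Delta H+|A|^2 H$ recalled in Section \ref{sec-prel} immediately gives $\partial_t \hat H=\Delta \hat H+|A|^2 H$. It therefore suffices to establish the pointwise bound
$$|A|^2 H\le \hat H^3+nB^2 \hat H.$$

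To linearize the curvature pinching, I would introduce the shifted principal curvatures $\mu_i:=\lambda_i+B$, which are nonnegative by the hypothesis $h_{ij}\ge -Bg_{ij}$. Then $\sum_i \mu_i=H+nB=\hat H\ge 0$, and expanding $|A|^2=\sum_i \lambda_i^2=\sum_i(\mu_i-B)^2$ gives the identity
$$|A|^2=\sum_i \mu_i^2-2B\hat H+nB^2.$$
The elementary fact that $\sum_i \mu_i^2\le \bigl(\sum_i \mu_i\bigr)^2$ whenever all $\mu_i\ge 0$ (the cross terms $\mu_i\mu_j$ being nonnegative) then yields the clean estimate
$$|A|^2\le \hat H^2-2B\hat H+nB^2.$$

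To close the argument, decompose $|A|^2 H=|A|^2\hat H-nB|A|^2$; since $B|A|^2\ge 0$ and $\hat H\ge 0$, this gives $|A|^2 H\le |A|^2 \hat H$. Substituting the previous bound and using $\hat H\ge 0$ once more,
$$|A|^2\hat H\le \hat H^3-2B\hat H^2+nB^2\hat H\le \hat H^3+nB^2\hat H,$$
where in the last step the nonnegative quantity $2B\hat H^2$ is dropped. Chaining these inequalities produces the desired pointwise bound.

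The whole argument is essentially a short algebraic manipulation, so there is no real obstacle to anticipate. The only mild subtlety is that $H$ itself can be negative (precisely when $\hat H<nB$), but the chain $|A|^2 H\le |A|^2\hat H\le \dots$ handles both signs uniformly, so no case split is needed. The introduction of $\hat H$ is motivated exactly by the desire to turn the lower bound on the individual $\lambda_i$ into a genuine nonnegativity statement $\hat H\ge 0$, which is what makes the Moser iteration of the next sections applicable.
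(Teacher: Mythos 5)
Your proof is correct and follows essentially the same route as the paper: shift the principal curvatures by $B$ to make them nonnegative, bound $\sum_i(\lambda_i+B)^2$ by $\bigl(\sum_i(\lambda_i+B)\bigr)^2=\hat H^2$ to get $|A|^2\le \hat H^2-2B\hat H+nB^2$, then substitute into $\partial_t\hat H=\Delta\hat H+|A|^2 H$ with $H=\hat H-nB$ and drop the nonnegative terms $nB|A|^2$ and $2B\hat H^2$. The cosmetic difference of introducing $\mu_i$ explicitly is immaterial.
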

\begin{proof}
Recall that the evolution equation of the mean curvature $H$ (\cite{Huisken84}) is,
\begin{equation}
 \partial_{t}H = \Delta H + \abs{A}^2 H.
\label{evolH}
\end{equation}
Let $\lambda_{i} (i=1,\cdots, n)$ be the principle curvatures. Then $\lambda_{i}\geq -B$ and
\begin{equation*}
 \abs{A}^2 = \lambda_{1}^2 + \cdots + \lambda_{n}^2.
\end{equation*}
Because $\lambda_{i} + B\geq 0$, one gets
\begin{equation*}
 (\lambda_{1} + B )^2 + \cdots + (\lambda_{n} + B)^2\leq (\lambda_{1} + B + \cdots + \lambda_{n} + B)^2 = (H + nB)^2 =\hat{H}^2.
\end{equation*}
That is
\begin{equation*}
 \abs{A}^2 + 2BH + nB^2 \leq \hat{H}^2
\end{equation*}
or, equivalently
\begin{equation}
 \abs{A}^2 \leq \hat{H}^2 - 2BH -nB^2 = \hat{H}^2 -2B(\hat{H}-nB) -nB^2 = \hat{H}^2 -2B\hat{H} + nB^2.
\label{upperA}
\end{equation}
Now, by (\ref{evolH}), we have
\begin{eqnarray*}
 \partial_{t}\hat{H} &=& \partial_{t}H = \Delta H + \abs{A}^2 H =\Delta\hat{H} + (\hat{H}-nB)\abs{A}^2\\
&\leq & \Delta\hat{H} + \hat{H}(\hat{H}^2 -2B\hat{H} + nB^2)\\
&\leq & \Delta\hat{H} + \hat{H}(\hat{H}^2 + nB^2).
\end{eqnarray*}
\end{proof}

Let $C_0, C_1$ be as in (\ref{Czero}) and $C_c$ as in Lemma \ref{lem-Cc}.
Then, using Moser iteration, we can establish a Harnack type inequality for the mean curvature.
\begin{lemma}
 Suppose that
\begin{equation}
 h_{ij}\geq -B g_{ij}.
\end{equation}
 Let
\begin{equation}
C_{3} := 2C^{2}_{c}(n, n +2, C_{1})\left(\norm{H}^2_{L^{n +2}(D)} + n^2B^2 \norm{1}^2_{L^{n +2}(D)}+ nB^2 \norm{1}_{L^{\frac{(n +2
)^2}{2n}}(D)}\right),
\label{Cthree}
\end{equation}
then there exist positive
constants $\delta_{2}(n, C_{1})$ and $C_{d}(n, C_{1}, C_{3})$ such
that if
\begin{equation}
 \norm{H}_{L^{n +2}(D)} + B\norm{1}_{L^{n +2}(D)} \leq \delta_{2} (n, C_{1})
\end{equation}
then
\begin{equation}
 \norm{H^{+}}_{L^{\infty}(D^{'})}\leq C_{d}(n, C_{1}, C_{3})(\norm{H}_{L^{n +2}(D)} + B\norm{1}_{L^{n +2}(D)}).
\end{equation}
\label{MoserMC}
\end{lemma}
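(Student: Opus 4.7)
The plan is to set $v=\hat{H}=H+nB\geq 0$ and use Lemma \ref{lowerH} to note that $v$ satisfies the differential inequality
\begin{equation*}
 \left(\frac{\partial}{\partial t}-\Delta\right)v\leq f\, v,\qquad f:=v^{2}+nB^{2}.
\end{equation*}
Since $H^{+}\leq \hat{H}$, an $L^{\infty}$ bound for $v$ on $D'$ gives the desired bound for $H^{+}$. The strategy is the standard two-stage Moser scheme: first use the critical lemma (Lemma \ref{lem-Cc}) to improve the integrability of $v$ from $L^{n+2}(D)$ to some space $L^{p_{1}}(D_{1})$ strong enough to push $f$ into the supercritical regime, and then run the supercritical Moser iteration of Section~5 on the shrunken cylinder $D_{1}\supset D'$.

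\textbf{Step 1 (Smallness check and first boost).} I would bound
\begin{equation*}
 \|f\|_{L^{(n+2)/2}(D)}\leq \|v\|_{L^{n+2}(D)}^{2}+nB^{2}\|1\|_{L^{(n+2)/2}(D)}
\end{equation*}
and then $\|v\|_{L^{n+2}(D)}\leq \|H\|_{L^{n+2}(D)}+nB\,\|1\|_{L^{n+2}(D)}$. Choosing $\delta_{2}(n,C_{1})$ small enough (in terms of the constant $\delta_{1}(n,n+2,C_{1})$ from \eqref{eq-delta1} and a harmless power of the geometric quantity $\|1\|_{L^{(n+2)/2}(D)}$, which is controlled by $\|1\|_{L^{n+2}(D)}^{2}$), the smallness hypothesis \eqref{small1} holds for this $f$ with $\beta=n+2$. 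Applying Lemma \ref{lem-Cc} with $\beta=n+2$ then yields
\begin{equation*}
 \|v\|_{L^{(n+2)^{2}/2}(D_{1})}\leq C_{c}(n,n+2,C_{1})\,\|v\|_{L^{n+2}(D)}.
\end{equation*}

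\textbf{Step 2 (Pushing $f$ into the supercritical regime).} From Step~1, $v^{2}\in L^{(n+2)^{2}/4}(D_{1})$. I would pick the working exponent $q=(n+2)^{2}/(2n)$, which satisfies $q>(n+2)/2$ (so the supercritical machinery applies) and $q\leq (n+2)^{2}/4$ (so finite-measure inclusion gives $v^{2}\in L^{q}(D_{1})$ once we write $\|v^{2}\|_{L^{q}(D_{1})}\leq \|v\|_{L^{(n+2)^{2}/2}(D_{1})}^{2}$ after inserting a Hölder factor $|D_{1}|^{\cdot}$ bounded by $\|1\|_{L^{(n+2)^{2}/(2n)}(D)}$-type quantities). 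Combining with $\|nB^{2}\|_{L^{q}(D_{1})}\leq nB^{2}\|1\|_{L^{(n+2)^{2}/(2n)}(D)}$ and Step~1, together with $(a+b)^{2}\leq 2a^{2}+2b^{2}$, one obtains the clean estimate
\begin{equation*}
 \|f\|_{L^{q}(D_{1})}\leq C_{3},
\end{equation*}
with $C_{3}$ exactly as defined in \eqref{Cthree}.

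\textbf{Step 3 (Supercritical Moser iteration on $D_{1}\supset D'$).} Since now $f\in L^{q}(D_{1})$ with $q>(n+2)/2$, I would re-run the iteration scheme of Section~5 using the cut-off functions $\eta_{k}$ living on the nested cylinders between $D_{1}$ and $D'$ (a verbatim copy of that construction, with the roles of $D,D'$ and the numerical radii adjusted to fit inside $D_{1}$, and with the replaced constant $C_{3}$ in place of $C_{0}$). This yields the supercritical Harnack-type inequality
\begin{equation*}
 \|v\|_{L^{\infty}(D')}\leq C_{b}(n,q,2,C_{3},C_{1})\,\|v\|_{L^{2}(D_{1})}.
\end{equation*}
Finally, by Hölder, $\|v\|_{L^{2}(D_{1})}\leq \|v\|_{L^{n+2}(D)}\,\|1\|_{L^{(n+2)/n}(D)}^{1/2}\leq c(n,C_{1})\bigl(\|H\|_{L^{n+2}(D)}+B\|1\|_{L^{n+2}(D)}\bigr)$, absorbing the geometric factor into $C_{d}$. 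Since $H^{+}\leq \hat{H}=v$, we get the claimed bound for $\|H^{+}\|_{L^{\infty}(D')}$ with a constant $C_{d}(n,C_{1},C_{3})$ depending monotonically on $C_{3}$.

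\textbf{Main obstacle.} The bookkeeping that makes the auxiliary constants behave monotonically in $C_{1}$ and $C_{3}$, as required by the convention in Section~4, is where I expect most of the care. Concretely, one must (i) choose $\delta_{2}$ small enough so that both the critical smallness \eqref{small1} is met \emph{and} the geometric quantities $\|1\|_{L^{(n+2)/2}(D)}$, $\|1\|_{L^{(n+2)^{2}/(2n)}(D)}$ remain controlled by the same data that defines $C_{1}$; and (ii) verify that the adapted supercritical iteration on $D_{1}$ (instead of the canonical $D$ of Section~5) produces constants that are still increasing functions of $C_{1}$ and $C_{3}$. Once this is in place, the plan above delivers the stated Harnack estimate.
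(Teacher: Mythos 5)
Your overall two-step strategy (critical lemma to boost integrability, then supercritical Moser iteration on the shrunken cylinder) is the same as the paper's, and Step~1 is essentially correct as written. Two points need attention.

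First, a minor one: the output of the critical step is in fact $\|\hat{H}\|_{L^{\frac{n+2}{n}(n+2)}(D_1)}$, not $\|\hat{H}\|_{L^{\frac{n+2}{2}(n+2)}(D_1)}$ (the displayed conclusion of Lemma~\ref{smalllem} appears to be a misprint; its proof derives the exponent $\tfrac{n+2}{n}\beta$, and the application in Lemma~\ref{MoserMC} uses that). With the correct exponent you get $\hat{H}^2\in L^{(n+2)^2/(2n)}(D_1)$ exactly, so no interpolating Hölder factor is needed in Step~2 and your $C_3$ reduces to the clean formula in~\eqref{Cthree}.

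The more serious issue is in Step~3. You run the supercritical iteration with $\beta=2$ to land at $\|v\|_{L^2(D_1)}$ and then interpolate down with $\|v\|_{L^2(D_1)}\leq\|v\|_{L^{n+2}(D)}\|1\|_{L^{(n+2)/n}(D)}^{1/2}$, claiming the geometric factor $\|1\|_{L^{(n+2)/n}(D)}^{1/2}$ can be absorbed into $C_d(n,C_1,C_3)$. It cannot: $C_1$ depends only on $\|H\|_{L^{n+2}}$, and $C_3$ only contains $\|1\|$-norms multiplied by powers of $B$. The smallness hypothesis bounds $B\|1\|_{L^{n+2}(D)}$, so it does not control $\|1\|_{L^{n+2}(D)}$ by itself when $B$ is small; indeed in the blow-up argument of Section~\ref{thmproof} the rescaled pinching constant is $B/Q_i\to 0$, so no bound on the bare spacetime volume is available through these parameters. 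The paper sidesteps this entirely by applying (\ref{secondM}) with $\beta=n+2$, $\lambda=\tfrac{n+2}{n}$ and $k=1$, which gives $\|\hat{H}\|_{L^\infty(D')}\leq C_b\,\|\hat{H}\|_{L^{\beta\lambda}(D_1)}=C_b\,\|\hat{H}\|_{L^{(n+2)^2/n}(D_1)}$, i.e.\ the supercritical iteration is started at exactly the exponent produced by the critical step, so no Hölder reduction (and no extraneous $\|1\|$-factor) ever appears. Replacing your $\beta=2$ by $\beta=n+2$ and invoking (\ref{secondM}) with $k=1$ would repair the argument.
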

\begin{proof}
Let $\hat{H} = H + nB\geq 0$. Then, by Lemma \ref{lowerH}
\begin{equation} 
 \partial_{t}\hat{H}\leq \Delta\hat{H} + f\hat{H}, \qquad \mbox{where} \qquad
 f = \hat{H}^2 + nB^2.
\label{evolhatH}
\end{equation}
We have
\begin{eqnarray*}
 \norm{f}^{\frac{n +2}{2}}_{L^{\frac{n +2}{2}}(D)} &=&\int_{0}^{1}\int_{M_{t}\cap B(x_0,1)} \left(\hat{H}^2 + nB^2 \right)^{\frac{n +2}{2}}
d\mu\, dt\\
&\leq& c_{n}\int_{0}^{1}\int_{M_{t}\cap B(x_0,1)} (\hat{H}^{n +2} + B^{n +2})d\mu dt \\
&\leq& c_{n}\int_{0}^{1}\int_{M_{t}\cap B(x_0,1)} (|H|^{n +2} + B^{n +2})d\mu dt\\
&=& c_{n}\norm{H}^{n + 2}_{L^{n +2}(D)} + c_{n}B^{n +2} \norm{1}_{L^{n +2}(D)}^{n +2}\\
&\leq& c_{n} \left(\norm{H}_{L^{n +2}(D)} + B\norm{1}_{L^{n +2}(D)} \right)^{n +2}.
\end{eqnarray*}
It follows that
\begin{equation*}
  \norm{f}_{L^{\frac{n +2}{2}}(D)} \leq c_{n}\left(\norm{H}_{L^{n +2}(D)} + B\norm{1}_{L^{n +2}(D)} \right)^{2}.
\end{equation*}
Let us choose
\begin{equation}
\label{eq-delta2}
 \delta_{2}(n, C_{1}) = \frac{\delta^{\frac{1}{2}}_{1}(n, n +2, C_{1})}{c_{n}^{\frac{1}{2}}},
\end{equation}
where $\delta_1(n,\beta,C_1)$ has been defined in the proof of Lemma \ref{smalllem}. Then, if
\begin{equation*}
 \norm{H}_{L^{n +2}(D)} + B\norm{1}_{L^{n +2}(D)} \leq \delta_{2} (n, C_{1}),
\end{equation*}
one has
\begin{equation*}
 \norm{f}_{L^{\frac{n +2}{2}}(D)} \leq \delta_{1}(n, n +2, C_{1}).
\end{equation*}
This means we are in the critical case ($q = \frac{n+2}{2}$) having a smallness assumption (\ref{small1}) satisfied. Hence, 
we can apply (\ref{critest}) with $\beta = n +2$ to obtain
\begin{equation}
\label{eq-H-sc}
 \norm{\hat{H}}_{L^{(\frac{n +2}{n}) (n +2)}(D_{1})} \leq C_{c}(n, n +2, C_{1})\norm{\hat{H}}_{L^{n +2}(D)}.
\end{equation}
This inequality brings us to the supercritical case for  (\ref{evolhatH}). In fact, let 
\begin{equation*}
 q = (\frac{n +2}{n})\cdot(\frac{n +2}{2}) > \frac{n +2}{2}.
\end{equation*}
Then we can bound $f$ in $\norm{\cdot}_{L^{q}(D_{1})}$ by $C_{3}$ defined by (\ref{Cthree}). Indeed, using 
(\ref{eq-H-sc}) we get
\begin{eqnarray*}
 \norm{f}_{L^{q}(D_{1})} &=& \norm{\hat{H}^2 + nB^2}_{L^{q}(D_{1})}\\
&\leq & \norm{\hat{H}}^{2}_{L^{(\frac{n +2}{n})(n +2)}(D_{1})} + nB^2 \norm{1}_{L^{\frac{(n +2
)^2}{2n}}(D_{1})}\\
&\leq & C^{2}_{c}(n, n +2, C_{1})\norm{\hat{H}}^2_{L^{n +2}(D)} + nB^2 \norm{1}_{L^{\frac{(n +2
)^2}{2n}}(D)}\\ &\leq&
2C^{2}_{c}(n, n +2, C_{1})\left(\norm{H}^2_{L^{n +2}(D)} + n^2B^2 \norm{1}^2_{L^{n +2}(D)}+ nB^2 \norm{1}_{L^{\frac{(n +2
)^2}{2n}}(D)}\right)
.
\end{eqnarray*}
Thus, we can use (\ref{secondM}) with $\lambda =\frac{n +2}{n}$, 
$\beta = n + 2$ and $k=1$ to obtain
\begin{eqnarray*}
 \norm{\hat{H}}_{L^{\infty}(D^{'})}&\leq& C_{b}(n, q, n +2, C_{3}, C_{1}) \norm{\hat{H}}_{L^{\frac{n +2}{n}(n +2)}(D_{1})}\\ &\leq& 
 C_{b}(n, q, n +2, C_{3}, C_{1})C_{c}(n, n+2, C_{1})\norm{\hat{H}}_{L^{n +2}(D)}.
\end{eqnarray*}
Noting that
\begin{equation*}
 \norm{H^{+}}_{L^{\infty}(D^{'})}\leq \norm{\hat{H}}_{L^{\infty}(D^{'})}
\end{equation*}
we finally obtain the desired estimate
\begin{eqnarray*}
 \norm{H^{+}}_{L^{\infty}(D^{'})}&\leq& C_{b}(n, q, n +2, C_{3}, C_{1})C_{c}(n, n +2, C_{1})\norm{\hat{H}}_{L^{n +2}(D)}\\
&\leq &n C_{b}(n, q, n +2, C_{3}, C_{1})C_{c}(n, n +2, C_{1}) (\norm{H}_{L^{n +2}(D)} + B\norm{1}_{L^{n +2}(D)})\\
&= & C_{d}(n, C_{1}, C_{3})(\norm{H}_{L^{n +2}(D)} + B\norm{1}_{L^{n +2}(D)}),
\end{eqnarray*}
where 
\begin{equation}
\label{eq-Cd}
C_d(n, C_{1}, C_{3}) := n C_{b}(n, q, n +2, C_{3}, C_{1})C_{c}(n, n +2, C_{1}).
\end{equation}
\end{proof}
\section{Proof of the Main Theorem}
\label{thmproof}
\h In this section, we give the proof of the main Theorem \ref{MCbound} stated in the introduction. 

\begin{proof}[Proof of Theorem \ref{MCbound}]
Since  $\norm{H}_{L^{\alpha}(M\times [0, T))}< \infty$ implies that $\norm{H}_{L^{ n + 2}(M\times [0, T))}<\infty$ if $\alpha> n +2$,
we only need to prove the Theorem for $\alpha = n +2$. \\
\h We argue by contradiction. Suppose that $T$ is the extinction time of the flow. Then, 
by Theorem \ref{Aunbound}, $\abs{A}$ is unbounded. It follows from (\ref{pinchA}) that we have (\ref{upperA}),
\begin{equation*}
 \abs{A}^2 \leq \hat{H}^2 -2B\hat{H} + nB^2 \leq \hat{H}^2 + nB^2 = (H + nB)^2 + nB^2,
\end{equation*}
and thus $\abs{H}$ is unbounded.
Because $H\geq -nB$, we know that $H^+$ is unbounded. Therefore, there exists a sequence of points $(x_{i}, t_{i})$ with $x_{i}\in M_{t_{i}}$ 
such that
\begin{equation}
 Q_{i}: = H(x_{i}, t_{i}) = \max_{0\leq t\leq t_{i}} \max_{x\in M_{t}} H(x, t) \to +\infty.
\label{maxMC}
\end{equation}
Consider the sequence $\tilde{M}^{i}_{t}$ of rescaled solutions for $t\in [0,1]$ defined by
\begin{equation*}
 \tilde{F}_{i} (\cdot,t) = Q_{i} F(\cdot, t_{i} + \frac{t-1}{Q^2_{i}}).
\end{equation*}
If $g, H$ and $A:= \{h_{jk}\}$ are the induced metric, the mean curvature and the second fundamental form of $M_{t}$, respectively, then
the corresponding rescaled quantities are given by
\begin{equation*}
 \tilde{g}_{i} = Q^2_{i} g;~ \tilde{H}_{i} =\frac{H}{Q_{i}};~ \abs{\tilde{A}_{i}}^2 =\frac{\abs{A}^2}{Q^2_{i}}.
\end{equation*}
It follows from (\ref{maxMC}) and (\ref{pinchA}) that, for the rescaled solutions we have
\begin{equation*}
 \tilde{H}_{i}(x_{i}, 1) =1
\end{equation*}
and
\begin{equation*}
 \tilde{A}_{i}\geq -\frac{B}{Q_{i}}\tilde{g}_{i}.
\end{equation*}
Consider the following sets in space and time
\begin{equation*}
 \tilde{D}^{i} = \cup_{0\le t\le 1} (B(x_{i}, 1)\cap (\tilde{M}_{i})_t);\,\,\, 
(\tilde{D}^{i})^{'} = \cup_{\frac{1}{12}\le t< 1}(B(x_{i}, \frac{1}{2})\cap (\tilde{M}_{i})_t).
\end{equation*}
Then, we can calculate
\begin{multline}
 \lim_{i\rightarrow\infty }\left( \norm{\tilde{H}_{i}}_{L^{n +2}(\tilde{D}^{i})} +\frac{B}{Q_{i}}\norm{1}_{L^{n +2}(\tilde{D}^{i})}\right)\\ =
\lim_{i\rightarrow\infty }\left\{ \left( \int^{t_{i}}_{t_{i}-\frac{1}{Q^2_{i}}}\int_{M_{t}\cap B(x_{i},\frac{1}{Q_{i}})}\abs{H}^{n +2} d\mu dt
\right)^{\frac{1}{n +2}} + B \left( \int^{t_{i}}_{t_{i}-\frac{1}{Q^2_{i}}}\int_{M_{t}\cap B(x_{i},\frac{1}{Q_{i}})} d\mu dt
\right)^{\frac{1}{n +2}}\right\}\\
\leq \lim_{i\rightarrow\infty }\left\{ \left( \int^{t_{i}}_{t_{i}-\frac{1}{Q^2_{i}}}\int_{M_{t}}\abs{H}^{n +2} d\mu dt
\right)^{\frac{1}{n +2}} + B \left( \int^{t_{i}}_{t_{i}-\frac{1}{Q^2_{i}}}\int_{M_{t}} d\mu dt
\right)^{\frac{1}{n +2}}\right\} 
= 0.
\label{vanishH}
\end{multline}
The last step follows from the facts that
\begin{equation*}
  \int_{0}^{T}\int_{M_{t}}\abs{H}^{n +2} d\mu dt<\infty; \,\,\,\, \int_{0}^{T}\int_{M_{t}}d\mu dt<\infty
\end{equation*}
and
\begin{equation*}
 \lim_{i\rightarrow \infty} \frac{1}{Q^2_{i}} =0.
\end{equation*}
Consequently, there is a universal constant $C>1$ such that, for our rescaled flows, the constants 
\begin{equation*}
\tilde{C}^{i}_{1} = (1 +\norm{\tilde{H}_{i}}^{n + 2}_{L^{n + 2}(\tilde{M}_{i}\times [0,1])})^{\frac{n}{n + 2}} = 
\left(1 +  \int^{t_{i}}_{t_{i}-\frac{1}{Q^2_{i}}}\int_{M_{t}}\abs{H}^{n +2} d\mu dt \right)^{\frac{n}{n + 2}}
\end{equation*}
satisfy
\begin{equation}
\label{eq-C1bound}
 1\leq \tilde{C}^{i}_{1}\leq C.
\end{equation}
By our choice of the constants $\delta_{2}(n, \tilde{C}^i_1)$, which are decreasing in the second variable (follows from (\ref{eq-delta1})
and (\ref{eq-delta2})), we obtain
\begin{equation*}
 \delta_{2}(n, \tilde{C}^{i}_{1})\geq \delta_{2}(n, C)>0. 
\end{equation*}
\h Hence, recalling (\ref{vanishH}), we have for $i$ sufficiently large,
\begin{equation}
  \norm{\tilde{H}_{i}}_{L^{n +2}(\tilde{D}^{i})} +\frac{B}{Q_{i}}\norm{1}_{L^{n +2}(\tilde{D}^{i})} \leq \delta_{2}(n, \tilde{C}^{i}_{1}).
\end{equation}
Thus, by Lemma \ref{MoserMC}
\begin{equation}
 \norm{\tilde{H}_{i}^{+}}_{L^{\infty}((\tilde{D}^{i})^{'})} \leq C_{d}(n, \tilde{C}^{i}_{1}, \tilde{C}^{i}_{3})
(\norm{\tilde{H}_{i}}_{L^{n +2}(\tilde{D}^{i})} +\frac{B}{Q_{i}}\norm{1}_{L^{n +2}(\tilde{D}^{i})}).
\label{Moserrescaled}
\end{equation}
On the one hand, we can also check that, for our rescaled flows, the constants 
\begin{equation*}
\tilde{C}^{i}_{3}= 2C^{2}_{c}(n, n +2, \tilde{C}^{i}_{1})\left(\norm{\tilde{H}_{i}}^2_{L^{n +2}(\tilde{D}^{i})} + 
n^2\frac{B^2}{Q^2_{i}} \norm{1}^2_{L^{n +2}(\tilde{D}^{i})}+ n\frac{B^2}{Q^2_{i}} \norm{1}_{L^{\frac{(n +2
)^2}{2n}}(\tilde{D}^{i})}\right)
\end{equation*}
satisfy
\begin{equation}
\label{eq-C3bound}
 \tilde{C}^{i}_{3}\leq C.
\end{equation}
This easily follows from (\ref{eq-Cc}) and (\ref{eq-C1bound}).
On the other hand, by our choice of the constants $C_{d}(n, \cdot, \cdot)$, which are increasing in the second and third variables 
(by (\ref{eq-Cd}), (\ref{eq-Cb}), (\ref{eq-Cc}) and (\ref{Cthree})), since we have (\ref{eq-C1bound}) and
(\ref{eq-C3bound}) we obtain
\begin{equation*}
 C_{d}(n, \tilde{C}^{i}_{1}, \tilde{C}^{i}_{3})\leq C_{d}(n, C, C) <\infty. 
\end{equation*}
As a result, we deduce from (\ref{Moserrescaled}) that
\begin{equation}
 \norm{\tilde{H}_{i}^{+}}_{L^{\infty}((\tilde{D}^{i})^{'})} \leq C_{d}(n, C, C)
\left(\norm{\tilde{H}_{i}}_{L^{n +2}(\tilde{D}^{i})} +\frac{B}{Q_{i}}\norm{1}_{L^{n +2}(\tilde{D}^{i})}\right).
\label{Moserrescaled2}
\end{equation}
Letting $i\rightarrow \infty$ in (\ref{Moserrescaled2}), and recalling (\ref{vanishH}), 
we find that
\begin{equation}
\lim_{i\rightarrow \infty} \norm{\tilde{H}_{i}^{+}}_{L^{\infty}((\tilde{D}^{i})^{'})} =0.
\end{equation}
This is a contradiction because $\norm{\tilde{H}_{i}^{+}}_{L^{\infty}((\tilde{D}^{i})^{'})} \geq \tilde{H}_{i}(x_{i},1) =1$ for all $i$. The proof
of our Main Theorem is complete.
\end{proof}

We will give a proof of Corollary \ref{cor-appl}.

\begin{proof}[Proof of Corollary \ref{cor-appl}]
In both cases, the mean convex case and the starshaped case we have\\
 (i) A lower bound for the mean curvature
\begin{equation*}
 H\geq -l~ \text{for some} ~l\ge0
\end{equation*}
and \\
(ii) An upper bound for the squared second fundamental form in terms of a linear function of the squared mean curvature
\begin{equation*}
 \abs{A}^2 \leq C_{\ast}H^2 + b~\text{for some}~ C_{\ast}, b>0.
\end{equation*}
In the mean convex case (ii) follows from \cite{HS} and in the
starshaped case, both (i) and (ii) follow from \cite{Smoczyk}, for
some uniform constants $C_{\ast}, b, l$. Choose $k$ large enough so that
$k > 2l$ and $(k-l)^2 \ge b$ which imply, for $\tilde{H} = H + k$,  
$$\tilde{H} > l \,\,\, \Longrightarrow (\tilde{H}-k)^2 \le \tilde{H}^2,$$
$$\tilde{H}^2 \ge (k-l)^2 \ge b, \,\,\, \mbox{and therefore},$$
$$\abs{A}^2 \leq C_{\ast}H^2 + b = C_{\ast}(\tilde{H}-k)^2 + b \le  \tilde{C}_{\ast}\tilde{H}^2,$$
for a uniform constant $\tilde{C}_{\ast}$. It easily follows that
\begin{equation*}
 \partial_{t}\tilde{H}\leq \Delta \tilde{H} + \tilde{C}_{\ast}\tilde{H}^3.
\end{equation*}
As can be seen from the proofs of Lemma \ref{MoserMC} and Theorem 8.1, this differential inequality combined with the integral bound 
(\ref{intboundH}) of the mean curvature in 
Theorem 1.2 allows us to extend the mean curvature flow past time $T$.
\end{proof}

{} 

\end{document}